\def\bP{\mathbb P}
\def\R{\mathbb{R}}
\def\sF{{\mathcal F}}
\def\sA{{\mathcal A}}
\def\sL{{\mathcal L}}
\def\sS{{\mathcal S}}
\def\bL{\mathbb{L}}
\def\E{\mathbb{E}}
\def\sF{\mathcal{F}}
\def\bQ{\mathbb{Q}}
\def\eps{\varepsilon}
\newcommand{\bF}{\mathbb{F}}
\newcommand{\bmo}{\text{bmo}}
\newcommand{\ybar}{\overline{y}}
\newcommand{\zbar}{\overline{z}}
\numberwithin{equation}{section}
\theoremstyle{plain}                
\newtheorem{theorem}{Theorem}[section]
\newtheorem{lemma}[theorem]{Lemma}
\theoremstyle{definition}           
\newtheorem{definition}[theorem]{Definition}
\newtheorem{assumption}[theorem]{Assumption}
\theoremstyle{remark}               
\newtheorem{remark}{Remark}[section]
\begin{document}

\pagenumbering{arabic} \pagestyle{plain}
\begin{center}
  {\large \bf Existence of an equilibrium with limited participation}
  \ \\ \ \\
Kim Weston\footnote{The author acknowledges support by the National Science Foundation under Grant No. DMS\#1908255 (2019-2022). Any opinions, findings and conclusions or recommendations expressed in this material are those of the author and do not necessarily reflect the views of the National Science Foundation (NSF).}\\

Rutgers University
\ \\


\today
\end{center}
\vskip .5in


A limited participation economy models the real-world phenomenon that some economic agents have access to more of the financial market than others.  We prove the global existence of a Radner equilibrium with limited participation, where the agents have exponential preferences and derive utility from both running consumption and terminal wealth.  Our analysis centers around the existence and uniqueness of a solution to a coupled system of quadratic backward stochastic differential equations (BSDEs).  We prove that the BSDE system has a unique $\sS^\infty\times\bmo$ solution.  We define a candidate equilibrium in terms of the BSDE solution and prove through a verification argument that the candidate is a Radner equilibrium with limited participation.  This work generalizes the model of Basak and Cuoco~\cite{BC98RFS} to allow for a stock with a general dividend stream and agents with exponential preferences.  We also provide an explicit example.


\section{Introduction}\label{section:intro}
We study a financial equilibrium with limited participation, in which the economic agents do not have equal access to trade in the financial market.  We prove the existence of a Radner equilibrium in a model with two exponential agents, where one agent is excluded from the stock market.  The agents derive utility from running and terminal consumption with a finite time horizon in a pure-exchange economy.

The problem of a limited participation economy dates back to Basak and Cuoco~\cite{BC98RFS}, who introduced a continuous-time, running consumption model of limited participation with two (classes of) economic agents:  an unconstrained agent with access to a complete market, and a constrained agent who cannot trade in the stock market and faces incompleteness.  The equilibrium existence result in Basak and Cuoco~\cite{BC98RFS} assumes that both agents have logarithmic preferences.  Although logarithmic preferences are mathematically convenient, they are practically limiting and undesirable.  In an extension to Basak and Cuoco~\cite{BC98RFS} on an infinite time horizon, Prieto~\cite{P13wp} proves equilibrium existence with limited participation when the constrained agent is logarithmic and the unconstrained agent has power preferences.  Chabakauri~\cite{C15JME} provides heuristics and numerical results for the case of two agents with power preferences but does not prove the existence of an equilibrium with limited participation in his setting.  We seek to prove the existence of an equilibrium with limited participation in a setting similar to Basak and Cuoco~\cite{BC98RFS} that moves beyond the assumption of logarithmic preferences for the constrained agent.

Our extension to the model of Basak and Cuoco~\cite{BC98RFS} allows for a general dividend stream for the stock and investors with preferences described by exponential utility functions.  We focus our analysis on a three-dimensional system of quadratic, coupled BSDEs.  
Our contribution is three-fold.  First, we derive a quadratic, coupled system of BSDEs as a candidate to describe an equilibrium with limited participation.  Next, given an $\sS^\infty\times\bmo$ solution to the BSDE system, we construct an equilibrium and prove that our construction satisfies the definition of equilibrium in a verification argument.  Finally, we prove the existence and uniqueness of an $\sS^\infty\times\bmo$ solution to the BSDE system, which shows that a Radner equilibrium exists.

BSDEs, and in particular quadratic BSDEs, have a rich history in applications to mathematical finance.  For example, El Karoui et.\,al.~\cite{EKPQ97MF} price contingent claims via a BSDE, Hu et.\,al.~\cite{HIM05AAP} study utility maximization using BSDEs, Frei and dos Reis~\cite{FDR11MFE} and Espinosa and Touzi~\cite{ET15MF} consider utility maximization with relative performance features, and Kramkov and Pulido~\cite{KP16AAP} study a quadratic BSDE system derived from a price impact model.  Coupled systems of quadratic BSDEs arise when multiple agents interact through a financial market.  Kardaras et.\,al.~\cite{KXZ21wp}, Xing and \v{Z}itkovi\'c~\cite{XZ18AP}, Weston and \v{Z}itkovi\'c~\cite{WZ20FS}, and Escauriaza et.\,al.~\cite{ESX21AAP} study Radner equilibrium problems where the agents have exponential preferences and the economy is supported by an {\it incomplete} financial market.  Such incomplete financial equilibria are notoriously difficult to study.  Our model contributes to this literature, since our constrained agent faces incompleteness.

Existence of a solution to a system of quadratic BSDEs is delicate.  Results such as Tevzadze~\cite{T08SPA}, Cheridito and Nam~\cite{CN15S}, and Hu and Tang~\cite{HT16SPA} require a special structure to apply, while the two-dimensional counterexample in Frei and dos Reis~\cite{FDR11MFE} shows the need for at least some additional structure on quadratic BSDE systems to guarantee existence of a solution.  Xing and \v{Z}itkovi\'c~\cite{XZ18AP} study BSDE systems with triangularly quadratic drivers and an underlying Markov structure.  Our BSDE system, defined below in \eqref{def:bsde}, has the triangularly quadratic form as required by Xing and \v{Z}itkovi\'c~\cite{XZ18AP}.  However, a careful look at our system reveals that a linear transformation of an autonomous subsystem is diagonally quadratic, allowing the results of Fan et.\,al.~\cite{FHT20wp} to apply after suitable truncation arguments and estimates.  The diagonally quadratic driver structure and the results of Fan et.\,al.~\cite{FHT20wp} further allow us to achieve uniqueness within the class of $\sS^\infty\times\bmo$ solutions.

Two important model features are key to deriving our BSDE system: exponential preferences and a traded annuity.  An annuity is an asset that pays a constant dividend stream of one per unit time.  Annuities are commonplace in financial economics and actuarial sciences.  In mathematical finance, Christensen and Larsen~\cite{CL14RAPS} introduced a traded annuity in an incomplete Radner equilibrium with running consumption and stochastic interest rates.  With stochastic interest rates (or even stochastic shadow interest rates), the traded annuity allows for a variable reduction in an individual agent's value function.  In another incomplete equilibrium model with running consumption, Weston and \v{Z}itkovi\'c~\cite{WZ20FS} identified the annuity variable reduction in the individual agent problems that we will exploit in the present work.

The paper is organized as follows.  Section~\ref{section:set-up} sets up the model and defines a Radner equilibrium with limited participation.  Section~\ref{section:main-results} describes the coupled, quadratic BSDE system \eqref{def:bsde} and states the main results:  Theorem~\ref{thm:char} and Theorem~\ref{thm:existence}.  Given an $\sS^\infty\times\bmo$ solution to the BSDE system, Theorem~\ref{thm:char} constructs an equilibrium with limited participation and provides the verification arguments.  Theorem~\ref{thm:existence} proves the existence and uniqueness of an $\sS^\infty\times\bmo$ solution to the key BSDE system, and therefore proves the existence of an equilibrium.  Section~\ref{section:implications} studies the implications of our model in connection with the broader literature on equilibrium with limited participation.  In particular, we provide an example of a constant coefficient dividend stream and a Pareto efficient equilibrium comparison.  The proofs are contained in Section~\ref{section:proofs}.

\section{Model set-up}\label{section:set-up}

We study a continuous-time Radner equilibrium in a pure exchange economy with a fixed finite time horizon $T<\infty$.  We let $B=(B_t)_{t\in[0,T]}$ be a Brownian motion on the probability space $(\Omega,\sF,\bP)$ equipped with the augmented Brownian filtration $\bF=(\sF_t)_{t\in[0,T]}$. We assume that $\sF_T = \sF$.  Throughout this work, equality (and inequality) is assumed to hold $\bP$-a.s., and we suppress time from the notation when possible.  All stopping times are assumed to be $[0,T]$-valued.

The set of all adapted, continuous, and uniformly bounded processes is denoted by $\sS^\infty$.  For $q\in[1,\infty)$, the set of all progressively measurable processes $\zeta$ satisfying $\E[\int_0^T |\zeta_t|^qdt]<\infty$ is denoted by $\sL^q$.
  A martingale $M$ is said to be a {\it BMO-martingale} if there exists a constant $C>0$ such that for all stopping times $\tau$, we have
$$
  \E\left[\langle M\rangle_T-\langle M\rangle_\tau\,|\,\sF_\tau\right] \leq C. 
$$
In this case, we write $M\in\text{BMO}$.  If we need to stress that $M$ is a $\text{BMO}$ martingale with respect to a particular probability measure $\bQ$, we write $M\in\text{BMO}(\bQ)$.  Otherwise, we assume $\text{BMO}$ to mean the space $\text{BMO}(\bP)$.  The collection of progressively measurable processes $\sigma$ for which $\int \sigma dB$ is a $\text{BMO}$ martingale is denoted by $\bmo$, where we may also amend $\bmo$ to include a probability measure when extra emphasis is required.   
Background and details on $\text{BMO}$ martingales can be found in Kazamaki~\cite{Kaz94}.

The dividend stream process $D$ is an It\^o process with dynamics given by
\begin{equation}\label{def:state-process}
  dD_t = \mu_{D,t}dt + \sigma_{D,t}dB_t, \quad D_0\in\R.
\end{equation}
Throughout, we make the following assumption:
\begin{assumption}\label{asm:state-process}
  The processes $\mu_D$ and $\sigma_D$ are progressively measurable and there exists a constant $M>0$ such that for all $t\in[0,T]$ and $\omega\in\Omega$,
  $$
    \left|\mu_{D,t}(\omega)\right|\leq M \quad \text{and} \quad
    \frac{1}{M}\leq \sigma_{D,t}(\omega)\leq M.
  $$
\end{assumption}

\noindent{\bf The financial market.} 
The financial market consists of two securities:  a stock and an annuity.  Both securities are in one-net supply, and their prices are denominated in units of a single consumption good.  The annuity pays a constant dividend stream of $1$ per unit time and a lump sum of $1$ at time $T$.  The stock pays a dividend of $D_t$ per unit time and a lump sum of $D_T$ at time $T$.

The stock and annuity prices will be determined endogenously in equilibrium as continuous semimartingles $S=(S_t)_{t\in[0,T]}$ and $A=(A_t)_{t\in[0,T]}$, respectively.  Their terminal dividends are defined as
$$
  S_T = D_T \quad \text{and} \quad A_T = 1.
$$
We focus on models for which $A$ and $S$ have dynamics of the form
\begin{align}
  dA &= -dt + A\left(\mu_{A}\,dt+\sigma_{A}\,dB\right), \quad A_T=1,
  \label{def:A-dynamics} \\
  dS &= -D_t\,dt + \left(\mu_{S}+S\mu_{A}\right)dt + \left(\sigma_{S}+S\sigma_{A}\right)dB, \quad S_T = D_T.\label{def:S-dynamics}
\end{align}
The price processes and their dynamics are outputs of an equilibrium.  The coefficients $\mu_A$, $\sigma_A$, $\mu_S$, and $\sigma_S$ must be progressively measurable and will be proven to have sufficient regularity for \eqref{def:A-dynamics} and \eqref{def:S-dynamics} to be well-defined.

The annuity provides a constant dividend stream at all times $t\in[0,T]$.  It plays the role of a zero-coupon bond when a zero-coupon bond trades in an economy with only terminal consumption.  When the annuity's volatility is zero, the annuity can replicate a locally riskless security.

Positions held in the annuity over time are denoted $\theta = (\theta_t)_{t\in[0,T]}$, and positions held in the stock over time are denoted $\psi = (\psi_t)_{t\in[0,T]}$.  For given price processes $A$ and $S$ and a pair of positions $(\theta, \psi)$, the associated wealth process $X = (X_t)_{t\in[0,T]}$ is defined by
$$
  X := \theta A + \psi S.
$$
Consumption occurs at a rate $c = (c_t)_{t\in[0,T]}$ per unit time and in a lump sum at time $T$.

There are many possibilities when defining admissible trading strategies for exponential investors, such as described in Delbaen et.\,al.~\cite{6AP} and Biagini and Sirbu~\cite{BS12S}.  Here, we employ an approach analogous to Choi and Larsen~\cite{CL15FS} and Choi and Weston~\cite{CW22SIFIN}, who study equilibria with lump sum consumption, rather than the running and lump sum consumption considered here.  A progressively measurable process $\xi$ is called a \textit{state price deflator} if $\xi$ is strictly positive and $\left(\xi A+\int_0^\cdot \xi_u du\right)$ and $\left(\xi S+\int_0^\cdot \xi_u D_udu\right)$ are local martingales\footnote{We do not restrict the value of $\xi_0$ other than requiring $\xi_0>0$ to be $\sF_0$-measurable (a constant, $\bP$-a.s.).  We make this choice for ease of computation to allow the equilibrium state price deflator to be the unrestricted agent's marginal utility of optimal consumption without normalization.}.  
\begin{definition} Given a state price deflator $\xi$, we say that the strategies $(\theta,\psi,c)$ are \textit{admissible with respect to $\xi$} if $\theta$, $\psi$, and $c$ are progressively measurable and
\begin{enumerate}
  \item $\theta$ and $\psi$ are bounded, $\int_0^T |c_t|dt<\infty$, $\bP$-a.s., and $\E\left[\int_0^T |\xi_t c_t|dt \right]<\infty$;
  \item The associated wealth process $X := \theta A + \psi S$ is continuous and satisfies the self-financing condition:
    $$
      dX = \theta (dA + dt) + \psi (dS + D\,dt) - c\,dt;
    $$
  
\end{enumerate}
If $(\theta,\psi, c)$ is admissible with respect to $\xi$, then we write $(\theta,\psi, c)\in\sA(\xi, x)$, where $x\in\R$ is used to denote the initial wealth of the strategy:
$$
  x := \theta_{0-} A_0 + \psi_{0-} S_0.
$$
\end{definition}
Unlike in Choi and Larsen~\cite{CL15FS} but similar to Choi and Weston~\cite{CW22SIFIN}, the state price deflator $\xi$ is not agent-specific because here, the economy will only consist of two agents, one of which will be constrained to hold zero stock shares.  This constraint simplifies the constrained agent's admissible wealth processes but adds to her set of possible dual elements.  The local martingale constraints on $\xi$ and the dynamics \eqref{def:A-dynamics} and \eqref{def:S-dynamics} force $\xi$ to have dynamics of the form
\begin{equation}\label{eqn:xi-dyn}
  d\xi = -\xi(r\,dt+\kappa\,dB),
\end{equation}
where $r$ and $\kappa$ must satisfy
$$
  r = \mu_A -\sigma_A\kappa \quad \text{ and } \quad \mu_S = \kappa\sigma_S.
$$
If we were to allow for agent-specific state price deflators, then both agents' state price deflators would be of the form \eqref{eqn:xi-dyn}, but $\kappa$ would be a free parameter for the constrained agent.  We are able to get away with an economy-wide state price deflator because the $\bmo$ structure provides enough regularity for both agents' verification arguments to go through.
\ \\

\noindent{\bf Economic agents.} 
The economy consists of two (classes of) economic agents: an unconstrained agent and a constrained agent.  Agent $1$ is unconstrained and can trade in both the stock and the annuity.  Agent $1$ is endowed with $\theta_{1,0-}\in\R$ shares in the annuity and $\psi_{1,0-} :=1$ shares in the stock.  Agent $2$, however, is constrained by only having access to the annuity and not the stock.  Agent $2$ is endowed with $\theta_{2,0-}\in\R$ shares in the annuity and is constrained to always hold zero shares of stock.  Because the assets are in one-net supply, we assume that $\theta_{1,0-} + \theta_{2,0-} = 1$.

The agents are utility-maximizing and seek to maximize their expected utility from running consumption and terminal wealth.  Both agents have exponential utility functions with risk aversion coefficients $\alpha_1, \alpha_2>0$ and time preference parameters $\rho_1, \rho_2\geq 0$.  Given price processes $A$ and $S$, a state price deflator $\xi$, and initial wealths $x_1 := \theta_{1,0-}A_0 + \psi_{1,0-} S_0$ and $x_2:=\theta_{2,0-}A_0$, the agents seek to solve
\begin{align}
  &\sup_{(\theta, \psi, c)\in\sA(\xi, x_1)} \E\left[-\int_0^T\exp\left(-\rho_1 t-\alpha_1 c_t\right)dt - \exp\left(-\rho_1 T-\alpha_1 X_T\right)\right], \label{def:optimal-1}\\
  &\sup_{(\theta, 0, c)\in\sA(\xi, x_2)} \E\left[-\int_0^T\exp\left(-\rho_2 t-\alpha_2 c_t\right)dt - \exp\left(-\rho_2 T-\alpha_2 X_T\right)\right]. \label{def:optimal-2}
\end{align}
The agents' optimization problems differ in their risk aversions, time preferences for consumption, and agent $2$'s constraint of holding zero shares of stock.

In Basak and Cuoco~\cite{BC98RFS}'s limited participation equilibrium, the financial market consists of a bank account and a stock, and the second agent is constrained to only invest in the bank account or consume.  Here, rather than allow for a traded bank account, we opt for a traded annuity.  Trading an annuity is mathematically advantageous with exponential agents because the annuity allows for a dimension reduction when studying the value functions associated with \eqref{def:optimal-1} and \eqref{def:optimal-2}.  The value functions can be decomposed in the form
$$
  -\exp\left(-\alpha_i\left(\frac{X_{i,t}}{A_t} -  Y_{i,t}\right)\right),
$$
where $X_i$ is agent $i$'s optimal wealth process, $A$ is the annuity price process, and $Y_i$ is agent $i$'s certainty equivalent process.  The processes $Y_1$ and $Y_2$ will play an important role below in Theorems~\ref{thm:char} and \ref{thm:existence}, as they appear in the BSDE system used to construct an equilibrium.

\begin{remark}
  It would be possible to allow for more agents in our setting and also to allow for stochastic income streams, similar to Weston and \v{Z}itkovi\'c~\cite{WZ20FS}, as long as we have the same two classes of economic agents.  We choose to omit these additional features because the essential arguments and interesting model features are already present in this simpler setting, and we do not wish to introduce features that serve mainly to increase the notational burden.
\end{remark}

\noindent{\bf Equilibrium.} An equilibrium occurs when markets clear and the agents act optimally.  Definition~\ref{def:equilibrium} makes this notion precise.
\begin{definition}\label{def:equilibrium}  Strategies $(\theta_1, \psi_1, c_1)$ and $(\theta_2, 0, c_2)$ and continuous semimartingales $A$ and $S$ form a {\it Radner equilibrium with limited participation} if there exists a state price deflator $\xi$ such that 
\begin{enumerate}
  \item {\it Strategies are optimal:} $(\theta_1, \psi_1, c_1)$ and $(\theta_2, 0, c_2)$ are admissible with respect to $\xi$,  $(\theta_1, \psi_1, c_1)$ solves \eqref{def:optimal-1}, and $(\theta_2, 0, c_2)$ solves \eqref{def:optimal-2};
  \item {\it Markets clear:} For all $t\in[0,T]$, we have
  \begin{align*}
    \theta_{1,t} + \theta_{2,t} &= 1, \\
    \psi_{1,t} &= 1, \\
    c_{1,t}+c_{2,t} &= 1+ D_t.
  \end{align*}

\end{enumerate}
\end{definition}
The market clearing requirement plus agent $2$'s constraint requires that agent $1$ must optimally choose to hold one share of stock for all time.

\section{Main results}\label{section:main-results}
We construct an equilibrium with the help of an $\sS^\infty\times\bmo$ solution $\big((a,Z_a),(Y_1,Z_1),(Y_2,Z_2)\big)$ to the BSDE system:
\begin{align}
\begin{split}\label{def:bsde}
  da&= \sigma_DZ_a\, dB + \left(-\exp(-a)+\alpha_\Sigma\mu_D + \rho_\Sigma-\frac{\alpha_\Sigma\alpha_2}{2}\sigma_D^2Z_2^2-\frac{\alpha_\Sigma\alpha_1}{2}\sigma_D^2\left(1-Z_2-\frac{Z_a}{\alpha_\Sigma}\right)^2\right)dt,\\
  dY_1 &= \sigma_D Z_1\,dB + \left(-\frac{\rho_1}{\alpha_1} + \frac{1+a+\alpha_1Y_1}{\alpha_1 \exp(a)} - \frac{\alpha_1}{2}\sigma_D^2\left(1-Z_2-\frac{Z_a}{\alpha_\Sigma}\right)^2 + \alpha_1\sigma_D^2 Z_1\left(1-Z_2-\frac{Z_a}{\alpha_\Sigma}\right)\right)dt,\\
  dY_2 &= \sigma_D Z_2\,dB+\left(-\frac{\rho_2}{\alpha_2}+\frac{1 + a +\alpha_2Y_2}{\alpha_2 \exp(a)}+\frac{\alpha_2}{2}\sigma_D^2 Z_2^2\right)dt,\\
  a_T &= Y_{1,T} = Y_{2,T} = 0,
\end{split}
\end{align}
where $\alpha_\Sigma := \left(\frac{1}{\alpha_1}+\frac{1}{\alpha_2}\right)^{-1}$ and $\rho_\Sigma:= \alpha_\Sigma\left(\frac{\rho_1}{\alpha_1}+\frac{\rho_2}{\alpha_2}\right)$.

This three-dimensional BSDE system is quadratic and coupled.  Upon inspection, the two-dimensional subsystem with equations for $a$ and $Y_2$ form a fully coupled, quadratic autonomous system, while $Y_1$ decouples and can be solved for in terms of $\big((a,Z_a),(Y_2, Z_2)\big)$.  The coupling of $\big((a,Z_a),(Y_2, Z_2)\big)$ occurs even amongst the quadratic terms.  However, a linear transformation of the $a$ and $Y_2$ equations allows us to form an auxiliary two-dimensional, coupled BSDE system that is diagonally quadratic.  The diagonally quadratic form along with appropriate regularity and growth on the transformed system's driver will allow us to apply the results of Fan et.\,al.~\cite{FHT20wp} to obtain existence and uniqueness of a solution.  We establish existence and uniqueness of an $\sS^\infty\times\bmo$ solution in Theorem~\ref{thm:existence} below.

Given an $\sS^\infty\times\bmo$ solution $\big((a,Z_a),(Y_1,Z_1),(Y_2,Z_2)\big)$ to \eqref{def:bsde}, we conjecture that the equilibrium annuity and stock price processes are given by
\begin{equation}\label{def:A-S}
  A := \exp(a) \quad \text{ and } \quad
  S := A\left(D-\frac{a}{\alpha_\Sigma} - Y_1-Y_2\right).
\end{equation}
$A$ and $S$ will satisfy the dynamics conjectured in \eqref{def:A-dynamics} and \eqref{def:S-dynamics} for $\kappa$, $\sigma_A$, $\mu_A$, $\mu_S$, and $\sigma_S$ defined by
\begin{align}\label{eqn:coefficients}
\begin{split}
  \kappa&:= \alpha_1\sigma_D\left(1-\frac{Z_a}{\alpha_2}-Z_2\right),\\
  \sigma_A &:= \sigma_D Z_a,\\
  \mu_A &:= \alpha_\Sigma\mu_D + \rho_\Sigma+\frac{\sigma_A^2}{2}-\frac{\alpha_\Sigma}{2}\left(\alpha_2\sigma_D^2Z_2^2 + \frac{1}{\alpha_1}(\kappa-\sigma_A)^2\right), \\
  \sigma_S &:= A\sigma_D\left(1-\frac{Z_a}{\alpha_\Sigma}-Z_1-Z_2\right),\\
  \mu_S &:= \sigma_S\left(\sigma_A + \alpha_1\sigma_DZ_1 + \frac{\alpha_1\sigma_S}{A}\right).
\end{split}
\end{align}
While $\sigma_A$, $\mu_A$, $\sigma_S$, and $\mu_S$ appear in the conjectured dynamics in \eqref{def:A-dynamics} and \eqref{def:S-dynamics}, $\kappa$ does not.  $\kappa$ represents the market price of risk and will appear in the dynamics of the equilibrium state price deflator as
$$
  d\xi = -\xi\left(\left(\mu_A-\kappa\sigma_A\right)dt + \kappa dB\right).
$$

The following result proves the existence of an equilibrium in terms of a solution to the BSDE system~\eqref{def:bsde}.  All equilibrium parameters -- asset prices (and their dynamics), optimal strategies, and the state price deflator -- are described in terms of the input parameters and the BSDE system solution.
\begin{theorem}\label{thm:char} Suppose that Assumption \ref{asm:state-process} holds and there exists an $\sS^\infty\times\bmo$ solution $\big((a,Z_a),(Y_1,Z_1),(Y_2,Z_2)\big)$ to the BSDE system~\eqref{def:bsde}.  Then, there exists a Radner equilibrium with limited particpation with annuity and stock price processes given by \eqref{def:A-S}.  For $\kappa$, $\sigma_A$, $\mu_A$, $\sigma_S$, and $\mu_S$ defined in \eqref{eqn:coefficients}, $A$ and $S$ satisfy the dynamics conjectured in \eqref{def:A-dynamics} and \eqref{def:S-dynamics}.  Furthermore, the agents' optimal wealth processes, $X_1$ and $X_2$, are given for $t\in[0,T]$ by 
\begin{align}
  X_{1,t} &:= A_t\left(\theta_{1,0-}+\frac{S_0}{A_0} + \int_0^t \frac{1}{A_u}\left(-\frac{a_u}{\alpha_1}-Y_{1,u}+\mu_{S,u}-\sigma_{S,u}\sigma_{A,u}\right)du + \int_0^t \frac{\sigma_{S,u}}{A_u}dB_u\right),\label{def:X1}\\
  X_{2,t} &:= A_t\left(\theta_{2,0-} - \int_0^t \frac{1}{A_u}\left(\frac{a_u}{\alpha_2}+Y_{2,u}\right)du\right).\label{def:X2}
\end{align}
The agents' optimal consumption processes are given by $c_i := \frac{a}{\alpha_i}+\frac{X_i}{A}+Y_i$ for $i=1,2$.  Agent $1$'s optimal stock holdings are $\psi_1 = 1$, and the agents' optimal annuity holdings are $\theta_1:=\frac{X_1-S}{A}$ and $\theta_2:=\frac{X_2}{A}$.  The process $\xi$ given by
\begin{equation}\label{def:xi}
  \xi_t := \alpha_1 \exp\left(-\rho_1 t - \alpha_1 c_{1,t}\right), \quad t\in[0,T].
\end{equation}
is a state price deflator.  The strategies $(\theta_1, \psi_1, c_1)$ and $(\theta_2, 0, c_2)$ are admissible with respect to $\xi$, and $X_1$ and $X_2$ are their associated wealth processes, respectively. $(\theta_1, \psi_1, c_1)$ solves \eqref{def:optimal-1}, and $(\theta_2, 0, c_2)$ solves \eqref{def:optimal-2}.
\end{theorem}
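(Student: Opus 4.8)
The plan is to read this as a verification theorem: take the given $\sS^\infty\times\bmo$ solution $\big((a,Z_a),(Y_1,Z_1),(Y_2,Z_2)\big)$, build all the candidate objects in \eqref{def:A-S}--\eqref{def:xi}, and check each clause of Definition~\ref{def:equilibrium} in turn. First I would record the free regularity: since $a\in\sS^\infty$, the annuity $A=\exp(a)$ is continuous and bounded above and away from $0$, and $\xi=\alpha_1\exp(-\rho_1 t-\alpha_1 c_1)>0$. Then come the It\^o computations. Applying It\^o's formula to $A=\exp(a)$ and to $S=A(D-a/\alpha_\Sigma-Y_1-Y_2)$ using \eqref{def:bsde} and \eqref{def:state-process}, I would match the $dt$ and $dB$ coefficients against \eqref{def:A-dynamics}--\eqref{def:S-dynamics} to read off the identities in \eqref{eqn:coefficients}. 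The state-price-deflator property of $\xi$ then reduces to the two algebraic relations $r=\mu_A-\sigma_A\kappa$ and $\mu_S=\kappa\sigma_S$: by the product rule one checks that $d(\xi A)+\xi\,dt$ and $d(\xi S)+\xi D\,dt$ are driftless, and $\mu_S=\kappa\sigma_S$ follows from a direct cancellation, since substituting $\sigma_A=\sigma_D Z_a$ and $\sigma_S$ into $\sigma_A+\alpha_1\sigma_D Z_1+\alpha_1\sigma_S/A$ collapses to $\kappa$ after using $\alpha_1/\alpha_\Sigma=1+\alpha_1/\alpha_2$. I would also compute $d\xi$ from \eqref{def:xi} and the dynamics of $c_1$ to confirm its market price of risk is exactly the $\kappa$ of \eqref{eqn:coefficients}.

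Next is bookkeeping that is structural but necessary. The self-financing conditions and initial wealths for $X_1,X_2$ come from differentiating \eqref{def:X1}--\eqref{def:X2}. Consumption clearing $c_1+c_2=1+D$ follows once $X_1+X_2=A+S$, because $c_i=a/\alpha_i+X_i/A+Y_i$ and $1/\alpha_1+1/\alpha_2=1/\alpha_\Sigma$; the share clearing $\theta_1+\theta_2=1$ is the same statement through $\theta_1=(X_1-S)/A$ and $\theta_2=X_2/A$. Admissibility of positions is the one place this step has content: a priori $X_1/A$ carries the stochastic integral $\int \sigma_{S,u}/A_u\,dB_u$, but since $\sigma_S/A=\sigma_D(1-Z_a/\alpha_\Sigma-Z_1-Z_2)$ is precisely the diffusion coefficient of $S/A=D-a/\alpha_\Sigma-Y_1-Y_2$, the martingale parts cancel in $\theta_1=X_1/A-S/A$, leaving a bounded finite-variation process; likewise $\theta_2=\theta_{2,0-}-\int_0^\cdot (a_u/\alpha_2+Y_{2,u})/A_u\,du$ is bounded because $a,Y_2\in\sS^\infty$ and $A$ is bounded below.

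The substantive part is optimality, and this is where I expect the main obstacle to lie. For each agent I would run the standard martingale verification: for an arbitrary admissible strategy set
$V_{i,t}:=-\int_0^t \exp(-\rho_i u-\alpha_i c_u)\,du-\exp\!\big(-\rho_i t-\alpha_i(X_t/A_t-Y_{i,t})\big)$, noting that $V_{i,T}$ equals the objective in \eqref{def:optimal-1}--\eqref{def:optimal-2} (since $A_T=1$, $Y_{i,T}=0$) while $V_{i,0}$ is a deterministic constant fixed by the initial wealth. Computing the drift of $V_i$ by It\^o, the $Y_i$-driver in \eqref{def:bsde} is engineered so that this drift is pointwise maximized over admissible consumption and positions at the candidate controls, where it vanishes, and is nonpositive otherwise (using concavity of $-e^{-\alpha_i c}$ in $c$ and the linearity of the wealth dynamics in positions, the first-order condition in $\psi$ pinning down $\kappa$ and $\mu_S$). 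This yields a local supermartingale that is a local martingale at the candidate; if $V_i$ is a genuine supermartingale for all admissible strategies and a true martingale at the candidate, then $\E[V_{i,T}]\le V_{i,0}$ with equality at the candidate, giving optimality.

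The remaining, and hardest, issue is the integrability needed to upgrade local to true (super)martingales and to take expectations. For the constrained agent this is cleaner: $X/A=\theta$ is bounded whenever $\theta$ is admissible, so $\exp(-\alpha_2(X/A-Y_2))$ is bounded and the martingale part has $\bmo$ integrand, hence is a true martingale. For the unconstrained agent the scaled wealth $X_1/A=\theta_1+\psi_1(D-a/\alpha_\Sigma-Y_1-Y_2)$ inherits the \emph{unbounded} dividend process $D$, so $\exp(-\alpha_1 X_1/A)$ is genuinely unbounded. Here I would control exponential moments using Assumption~\ref{asm:state-process}: bounded $\mu_D,\sigma_D$ give $D$ Gaussian-type tails and finite exponential moments of every order, which, combined with the $\bmo$ property of $Z_a,Z_1,Z_2$ and the reverse-H\"older/energy inequalities for $\bmo$ martingales (Kazamaki~\cite{Kaz94}), should deliver uniform integrability of the relevant family and the true-martingale property at the candidate. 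Reconciling this unbounded $D$-dependence of agent $1$ with the verification is the crux of the argument, and is exactly the point where the $\bmo$ regularity of the BSDE solution is indispensable.
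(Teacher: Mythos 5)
Your outline is correct and, for most of the theorem, coincides with the paper's proof: the It\^o computations matching \eqref{def:A-dynamics}--\eqref{def:S-dynamics}, the cancellation of the martingale parts in $\theta_1=X_1/A-S/A$ that makes the positions bounded, the market-clearing algebra via $1/\alpha_1+1/\alpha_2=1/\alpha_\Sigma$, and the primal supermartingale verification for the constrained agent (Fenchel's inequality applied to the drift, with $X/A=\theta$ bounded) are exactly the paper's steps. Where you genuinely diverge is agent $1$'s optimality. You propose the same primal argument for agent $1$, with the value process $-\exp(-\alpha_1(X/A-Y_1))$, and you correctly identify the obstruction: $X/A$ inherits the unbounded $D$ through $\psi\,S/A$, so the exponential is unbounded and upgrading the local supermartingale requires uniform integrability of $\bigl(\exp(-\alpha_1 X_\tau/A_\tau)\bigr)_\tau$ over stopping times for \emph{every} admissible $(\theta,\psi,c)$, not just the candidate. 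This is provable under Assumption~\ref{asm:state-process} (bounded $\theta,\psi$ plus Gaussian-type exponential moments of $D_\tau$ uniformly in $\tau$), but you leave it at ``should deliver,'' and you would also need to check that the pointwise maximization of the drift over $\psi$ stays finite on the set $\{\sigma_S=0\}$ --- which works only because $\mu_S$ is defined proportional to $\sigma_S$ in \eqref{eqn:coefficients}, so the linear term in $\psi$ vanishes there too. The paper avoids both issues by proving agent $1$'s optimality through duality: with $\widetilde U(t,y)=\sup_c\{U(t,c)-yc\}$ and $\xi_t=\partial_cU(t,c_{1,t})$, the objective of any admissible $(\theta,\psi,c)$ is bounded by $\E[\widetilde U(T,\xi_T)+\int_0^T\widetilde U(t,\xi_t)dt]+\xi_0X_{1,0}$ once one knows $\xi X+\int_0^\cdot\xi_uc_udu$ is a true martingale, and that martingale property needs only boundedness of the positions, uniform integrability of $(\xi_\tau S_\tau)_\tau$ (obtained from the reverse H\"older inequality for the BMO exponential $M=e^{-\int_0^\cdot A_u^{-1}du}\xi A$), and the integrability $\E\int_0^T|\xi_uc_u|du<\infty$ that is built into admissibility. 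In short, the duality route shifts the entire integrability burden onto $\xi$, which is controlled once and for all by Kazamaki's theorems, whereas your route must re-establish exponential moment bounds strategy by strategy; both can be made to work, but the missing uniform-integrability estimate is the one step you would actually have to write out.
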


\begin{remark}[Endogenously nondegenerate volatility]
In equilibrium problems where an asset in the financial market pays a dividend with nonzero volatility, degeneracy of that asset's volatility can become an issue.  The endogenous completeness literature studies such questions in equilibrium and derivatives pricing; see, for example, Anderson and Raimondo~\cite{AR08E}, Hugonnier et.\,al.~\cite{HMT12E}, Herzberg and Riedel~\cite{HR13JME}, Kramkov and Predoiu~\cite{KP14SPA}, and Schwartz~\cite{S17FS}.  Recently, Escauriaza et.\,al.~\cite{ESX21AAP} studied an incomplete Radner equilibrium with a terminal dividend-paying stock and terminal consumption.  Escauriaza et.\,al.~\cite{ESX21AAP} overcame a difficulty in proving the existence of a solution to their equilibrium-characterizing BSDE system because of a discontinuity that arose in the BSDE driver when degeneracy occurs in the volatility.  Their volatility process, much like in the present work, was endogenously determined as an output of equilibrium.  
In our model, the stock pays a dividend with a nonzero volatility.  However, we use a duality approach to prove agent $1$'s optimality, which circumvents potential difficulties associated with degenerate endogenous volatility.


\end{remark} 

The following result, Theorem~\ref{thm:existence}, is one of the two main results of the paper and proves the existence and uniqueness of an $\sS^\infty\times\bmo$ solution to~\eqref{def:bsde}.  Together with the other main result, Theorem~\ref{thm:char}, an $\sS^\infty\times\bmo$ solution to \eqref{def:bsde} proves the existence of an equilibrium in the limited participation economy.
\begin{theorem}\label{thm:existence}
Under Assumption~\ref{asm:state-process}, there exists a unique $\sS^\infty\times\bmo$ solution to the BSDE system~\eqref{def:bsde}.
\end{theorem}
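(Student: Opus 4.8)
The plan is to exploit the triangular structure noted after \eqref{def:bsde}: the pair $\big((a,Z_a),(Y_2,Z_2)\big)$ solves an autonomous two-dimensional subsystem, and $Y_1$ is recovered afterwards from a scalar BSDE whose coefficients are the already-determined processes. For the autonomous subsystem I would first introduce the linear change of variable $\tilde a := a + \alpha_\Sigma Y_2$, so that $Z_{\tilde a}=Z_a+\alpha_\Sigma Z_2$. A direct computation shows that this precise coefficient is what is needed: the free $Z_2^2$ contributions cancel and, after substituting $Z_a=Z_{\tilde a}-\alpha_\Sigma Z_2$ into $1-Z_2-\tfrac{Z_a}{\alpha_\Sigma}=1-\tfrac{Z_{\tilde a}}{\alpha_\Sigma}$, the entire $z$-dependence of the $\tilde a$-driver reduces to $-\tfrac{\alpha_\Sigma\alpha_1}{2}\sigma_D^2\big(1-\tfrac{Z_{\tilde a}}{\alpha_\Sigma}\big)^2$, which involves $Z_{\tilde a}$ alone. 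Since the $Y_2$-driver is quadratic in $Z_2$ alone, the transformed pair $\big((\tilde a,Z_{\tilde a}),(Y_2,Z_2)\big)$ is diagonally quadratic (indeed fully decoupled in $z$, the only coupling entering through the $y$-variables via $a=\tilde a-\alpha_\Sigma Y_2$ in the $\exp(\pm a)$ terms). This is exactly the structure to which Fan, Hu and Tang~\cite{FHT20wp} applies.

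The obstruction to a direct appeal to \cite{FHT20wp} is that the terms $-\exp(-a)$ and $\tfrac{1+a+\alpha_i Y_i}{\alpha_i\exp(a)}$ are exponential in the unknowns and hence neither bounded nor Lipschitz in the $y$-variables. I would therefore replace $a$ inside every occurrence of $\exp(\pm a)$ and every affine prefactor by the truncation $(a\wedge K)\vee(-K)$. The truncated drivers are Lipschitz in $(\tilde a,Y_2)$ and retain the diagonally quadratic $z$-structure with bounded terminal data, so \cite{FHT20wp} yields a unique $\sS^\infty\times\bmo$ solution of the truncated system for each $K$.

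The core of the argument is a set of a priori $\sS^\infty$ bounds on $(a,Y_2)$ that are \emph{independent of} $K$, so that for $K$ large the truncation is inactive and the truncated solution solves \eqref{def:bsde}. Here the sign structure is decisive. For the lower bound on $a$ I would use the representation $a_t=\E_t[\int_t^T(-f_a)\,ds]$ (valid since the truncated $Z_a\in\bmo$): in $-f_a$ the terms $\exp(-a)$ and the two quadratic terms are all nonnegative, so discarding them gives $a_t\geq-(\alpha_\Sigma M+\rho_\Sigma)T$. For the upper bound I would use the pointwise inequality $f_a\leq-\exp(-a)+\alpha_\Sigma M+\rho_\Sigma$ (discarding the nonpositive quadratic terms) together with the BSDE comparison principle against the deterministic solution of $\dot\phi=-\exp(-\phi)+\alpha_\Sigma M+\rho_\Sigma$, $\phi_T=0$, which is bounded on $[0,T]$; the dissipative $-\exp(-a)$ term is precisely what keeps $\phi$, hence $a$, from blowing up. With $a\in\sS^\infty$ in hand, $\exp(\pm a)$ are bounded above and below, the $Y_2$-equation becomes a scalar quadratic BSDE (quadratic in $Z_2$, affine in $Y_2$, bounded remainder), and standard a priori $\sS^\infty$ estimates for scalar quadratic BSDEs give $Y_2\in\sS^\infty$; the $\bmo$ bounds on $Z_a,Z_2$ then follow from the energy estimate for quadratic BSDEs with bounded $Y$. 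Choosing $K$ above these bounds identifies the two systems and establishes existence of $\big((a,Z_a),(Y_2,Z_2)\big)\in\sS^\infty\times\bmo$.

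Finally I would recover $Y_1$. With $a\in\sS^\infty$ and $Z_a,Z_2\in\bmo$ now fixed, the $Y_1$-equation is linear in $Y_1$ (with bounded coefficient $\exp(-a)$) and linear in $Z_1$ with coefficient $\alpha_1\sigma_D^2\big(1-Z_2-\tfrac{Z_a}{\alpha_\Sigma}\big)\in\bmo$, and its inhomogeneous part is the sum of bounded processes and the square of a $\bmo$ process. Its unique $\sS^\infty\times\bmo$ solvability follows from linear BSDE theory after a Girsanov change of measure, which is legitimate precisely because the $Z_1$-coefficient is a $\bmo$ integrand \cite{Kaz94}. Uniqueness for the whole system is inherited from the uniqueness statement of \cite{FHT20wp} for the diagonally quadratic pair $(\tilde a,Y_2)$, transported back through the bijective, $\sS^\infty\times\bmo$-preserving map $(a,Y_2)\mapsto(\tilde a,Y_2)$, together with uniqueness for the linear $Y_1$-BSDE. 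The main obstacle is the third step: producing two-sided $\sS^\infty$ bounds that do not depend on the truncation level, since the exponential nonlinearities $\exp(\pm a)$ rule out the naive estimates and force one to exploit the favorable signs and the dissipativity of the $-\exp(-a)$ term.
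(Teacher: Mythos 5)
Your overall strategy coincides with the paper's: decouple $(Y_1,Z_1)$, pass to $\tilde a:=a+\alpha_\Sigma Y_2$ (the paper's $Y_\Sigma$) to make the autonomous pair diagonally quadratic, truncate, invoke Fan--Hu--Tang, and lift the truncation with $K$-uniform a priori bounds. However, there is a genuine gap in the step you yourself flag as the core of the argument: the $K$-uniform \emph{upper} bound on $a$. Writing the $a$-equation in the standard backward form $a_t=\E\bigl[\int_t^T(-f_a)\,ds\,|\,\sF_t\bigr]$, the relevant driver is
$-f_a=e^{-a}-\alpha_\Sigma\mu_D-\rho_\Sigma+\tfrac{\alpha_\Sigma\alpha_2}{2}\sigma_D^2Z_2^2+\tfrac{\alpha_\Sigma\alpha_1}{2}\sigma_D^2\bigl(1-Z_2-\tfrac{Z_a}{\alpha_\Sigma}\bigr)^2$,
which \emph{dominates} $e^{-y}-\alpha_\Sigma M-\rho_\Sigma$. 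The comparison principle (larger driver, larger solution) therefore yields $a\geq\phi$ for your ODE $\dot\phi=-e^{-\phi}+\alpha_\Sigma M+\rho_\Sigma$, $\phi_T=0$ --- a second lower bound, not an upper bound. The dissipativity of $-e^{-a}$ prevents $a$ from escaping to $-\infty$, not to $+\infty$; the true obstruction to an upper bound is the pair of nonnegative quadratic terms in $Z_a$ and $Z_2$ above, which cannot be discarded in that direction and whose control would require $K$-uniform $\bmo$ estimates that are not available at that stage of the argument. Since your truncation $(a\wedge K)\vee(-K)$ is two-sided, you cannot deactivate it without a genuine $K$-uniform upper bound, so existence does not follow as written.

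The paper sidesteps this entirely by truncating $a$ \emph{from below only}, replacing $a$ by $\max(a,-N)$ inside the nonlinearities: the maps $x\mapsto e^{-\max(x,-N)}$ and $x\mapsto e^{-\max(x,-N)}(1+\max(x,-N))$ are already bounded and Lipschitz on all of $\R$, so only the (easy) lower bound on $a$ is needed to remove the truncation, and no upper bound on $a$ is ever proved or required. If you switch to the one-sided truncation, the rest of your plan goes through essentially as in the paper. Two smaller points: (i) your truncation leaves the $Y_2$-driver affine, hence unbounded, in $Y_2$, which breaks the boundedness hypothesis $(H1)$ of Fan--Hu--Tang as the paper verifies it; the paper additionally truncates $Y_2$ by $m_N(Y_2)=\min(N,\max(-N,Y_2))$. (ii) The $\sS^\infty$ bound on $Y_2$ should be run using only the lower bound on $a$ --- the coefficient $e^{-a}$ and the free term $e^{-a}(1+a)$ are bounded whenever $a\geq -C$ --- exactly as the paper does via Girsanov and Gronwall, so that it does not secretly rely on the missing upper bound.
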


\section{Equilibrium implications}\label{section:implications}
In this section, we discuss how our model fits into the broader literature on equilibrium with limited participation and financial equilibrium in general.
\subsection{No bubbles in equilibrium}
Suppose that we have a Radner equilibrium with limited participation with prices $A$ and $S$, state price density $\xi$, and optimal strategies $(\theta_1, \psi_1, c_1)$, $(\theta_2, 0, c_2)$ that are admissible with respect to $\xi$.

Hugonnier~\cite{H12JET} observed that bubbles exist in the stock market of the original model of Basak and Cuoco~\cite{BC98RFS}.  A stock market bubble means that the equilibrium stock price is not the same as (and is strictly greater than) the cost of replicating the stock's dividends.  Because Hugonnier~\cite{H12JET} and Basak and Cuoco~\cite{BC98RFS} focus on agents with utility that is finite only on the positive real line (and, in fact, only prove the existence of an equilibrium for an economy with two logarithmic agents), the state price density has a strict local martingale component.

In contrast, the present model considers exponential agents, whose utility is finite on the entire real line.  Utility maximization problems for utility functions that are finite on the entire real line are known to produce dual elements that are martingales, not strict local martingales; see, for example, Bellini and Frittelli~\cite{BF02MF}.  Here, the local martingale component of $\xi$ is a martingale, not a strict local martingale.  Lemma~\ref{lemma:mg} below proves that $\left(\xi S +\int_0^\cdot \xi_u D_udu\right)$ is a martingale, which gives us that
\begin{equation}\label{eqn:repl-S}
  S_t = \E\left[\int_t^T \frac{\xi_u}{\xi_t} D_u du + \frac{\xi_T}{\xi_t}D_T\right], \quad t\in[0,T].
\end{equation}
Therefore, the equilibrium stock price agrees with the stock's replication price, showing that a stock market bubble does not exist.  Similarly, we can apply Lemma~\ref{lemma:mg} to see that there is no annuity market bubble because
\begin{equation}\label{eqn:repl-A}
  A_t = \E\left[\int_t^T \frac{\xi_u}{\xi_t} du + \frac{\xi_T}{\xi_t}\right], \quad t\in[0,T].
\end{equation}

\subsection{Pareto efficient case}
Our model's incompleteness stems from the second agent's constraint.  The Pareto efficient case removes agent $2$'s constraint and studies when both agents have access to both assets.  For $i=1,2$, we denote $U_i(t,c) := -\exp(-\rho_i t - \alpha_i c)$ for $t\in[0,T]$, $c\in\R$.  We consider a representative agent with weight $\gamma>0$, whose utility function is 
$$
  U_\gamma(t,c) := \sup_{c_1+c_2\leq c} \left\{U_1(t,c_1)+\gamma U_2(t,c_2)\right\} = -\exp\left(-\rho_\Sigma t - \alpha_\Sigma c\right), \quad
  t\in[0,T],\ c\in\R,
$$
where $\alpha_\Sigma := \left(\frac{1}{\alpha_1}+\frac{1}{\alpha_2}\right)^{-1}$ and $\rho_\Sigma := \alpha_\Sigma\left(\frac{\rho_1}{\alpha_1}+\frac{\rho_2}{\alpha_2}\right)$.  In what follows, we use the superscript $PE$ to distinguish the Pareto efficient terms from the model input parameters, which are the same for the Pareto efficient and limited participation settings.

Under Assumption \ref{asm:state-process}, based on the aggregate supply $1+D$, the Pareto efficient equilibrium's state price density is given by
$$
  \xi^{PE}_t = \frac{\partial}{\partial c} U_\gamma(t, 1+D_t), \quad t\in[0,T].
$$
The Pareto efficient equilibrium asset prices are given by formulas that are analogous to \eqref{eqn:repl-S} and \eqref{eqn:repl-A}:
\begin{align*}
  S^{PE}_t &= \E\left[\int_t^T \frac{\xi^{PE}_u}{\xi^{PE}_t} D_u du + \frac{\xi^{PE}_T}{\xi^{PE}_t}D_T\right] \quad \text{ and } \quad
  A^{PE}_t = \E\left[\int_t^T \frac{\xi^{PE}_u}{\xi^{PE}_t} du + \frac{\xi^{PE}_T}{\xi^{PE}_t}\right].
\end{align*}
The dynamics for $\xi^{PE}$ are
\begin{align*}
  d\xi^{PE} &= -\xi^{PE}\left(\left(\rho_\Sigma+\alpha_\Sigma \mu_D - \frac12\alpha_\Sigma^2\sigma_D^2\right)dt + \alpha_\Sigma\sigma_D dB \right).
\end{align*}
Based on $\xi^{PE}$'s dynamics, we can identify the Pareto efficient equilibrium interest rate as $r^{PE} = \rho_\Sigma + \alpha_\Sigma\mu_D-\frac12\alpha_\Sigma^2\sigma_D^2$ and the Pareto efficient market price of risk as $\kappa^{PE} = \alpha_\Sigma\sigma_D$.

\subsection{Constant coefficient case}\label{section:constant}
As an example, we consider the constant coefficient case, where the dividend stream is given by
$$
  dD_t = \mu_D dt + \sigma_D dB_t, \quad D_0\in\R.
$$
Here, $\mu_D\in\R$ and $\sigma_D>0$ are constants.  This case can be solved explicitly by noticing that an $\sS^\infty\times\bmo$ solution to \eqref{def:bsde} exists for which $Z_a = Z_1 = Z_2=0$.  For $r:= \rho_\Sigma + \alpha_\Sigma\mu_D - \frac12\alpha_\Sigma\alpha_1 \sigma_D^2$, we have
$$
  da = (-\exp(-a)+r)dt, \quad a_T = 0.
$$
Thus, $\mu_A = r$, $\sigma_A = 0$, and
$$
  dA = (Ar -1)dt, \quad A_T = 1.
$$
Since $\sigma_A = 0$, the annuity does not have a Brownian ($dB$) term.  (In fact, the annuity is not stochastic at all.)  In this market, the annuity can replicate a bank account with the interest rate $r$.  Furthermore, we have $\mu_S = \alpha_1 A \sigma_D^2$,  $\sigma_S = A\sigma_D$, and so,
$$
  dS = -D dt + \left(\alpha_1 A\sigma_D^2+rS\right)dt + \sigma_D A\,dB, \quad S_T = D_T.
$$


Next, we compare the market parameters between the Pareto efficient and limited participation cases to see that
\begin{align}
\begin{split}\label{eqn:PE-comparisons}
  \kappa = \alpha_1\sigma_D &> \alpha_\Sigma\sigma_D = \kappa^{PE},\\
  r = \rho_\Sigma + \alpha_\Sigma\mu_D - \frac12\alpha_\Sigma\alpha_1\sigma_D^2 &< \rho_\Sigma + \alpha_\Sigma\mu_D-\frac12\alpha_\Sigma^2\sigma_D^2 = r^{PE}.
\end{split}
\end{align}
The superscript $PE$ denotes the Pareto efficient equilibrium outputs, as opposed to the limited participation equilibrium outputs. The comparisons in \eqref{eqn:PE-comparisons} are as expected, qualitatively, and agree with those in Basak and Cuoco~\cite{BC98RFS}.  In the limited participation economy, as compared to the Pareto efficient setting, the constrained agent must be enticed to hold a share of stock.  A premium, via the market price of risk, is needed to ensure that the constrained agent will demand a sufficient number of shares to clear the stock market.  The annuity market faces an increased demand in the limited participation economy compared to the Pareto efficient economy.  Thus, the limited participation interest rate is lower than the Pareto efficient interest rate, due to increased demand for the annuity.

\section{Proofs}\label{section:proofs}
Before beginning the proof of Theorem \ref{thm:char}, we define and establish regularity for several quantities.  We will take these definitions and properties as given in the proofs below.

Suppose that Assumption \ref{asm:state-process} holds and that $\big((a,Z_a),(Y_1,Z_1),(Y_2,Z_2)\big)$ is an $\sS^\infty\times\bmo$ solution to \eqref{def:bsde}.  Given this solution, we define $A$ and $S$ by \eqref{def:A-S}, and note that $A$ and $S$ are continuous semimartingales.  The terms $\kappa$, $\sigma_A$, $\mu_A$, $\sigma_S$, $\mu_S$ are defined by \eqref{eqn:coefficients}.  We let $X_1$ be defined by \eqref{def:X1}, and $X_2$ be defined by \eqref{def:X2}.  We define $\psi_{1}:= 1$, $\theta_{1}:=\frac{X_1-S}{A}$, $\theta_2:=\frac{X_2}{A}$, and $c_i := \frac{a}{\alpha_i}+\frac{X_i}{A}+Y_i$ for $i=1,2$.  Since $A$, $S$, $X_1$, and $X_2$ are continuous semimartingales, we have that
$$
  \theta_{1,0} = \frac{X_{1,0}-S_0}{A_0} = \theta_{1,0-} \quad \text{ and }
  \quad \theta_{2,0} = \frac{X_{2,0}}{A_0} = \theta_{2,0-},
$$
while $\psi_{1,0} = 1 = \psi_{1,0-}$.

Using that $a, Y_1, Y_2\in\sS^\infty$ and $Z_a, Z_1, Z_2\in\bmo$ with the definitions just above, we have that $A, X_2, \theta_2, c_2, \psi_1\in\sS^\infty$ and $\sigma_A, \kappa, \sigma_S\in\bmo$.  Moreover, Theorem 2.2 in Kazamaki~\cite{Kaz94} guarantees that for all $\text{BMO}$ martingales $M$, there exists a constant $C>0$ such that $\E[\exp(C\langle M\rangle_T)]<\infty$.  Therefore, $S, \mu_A, \mu_S, X_1, \theta_1, c_1 \in \sL^q$ for all $q\in[1,\infty)$.

\begin{lemma}\label{lemma:xi}
  Suppose that Assumption \ref{asm:state-process} holds and that there exists an $\sS^\infty\times\bmo$ solution $\big((a,Z_a),(Y_1,Z_1),(Y_2,Z_2)\big)$ to the BSDE system~\eqref{def:bsde}.  The process $\xi$ given by \eqref{def:xi} is a state price deflator, and $(\theta_1, \psi_1, c_1)$, $(\theta_2, 0, c_2)$ are admissible with respect to $\xi$.  $X_1$ defined in \eqref{def:X1} is the wealth process corresponding to $(\theta_1, \psi_1, c_1)$, and $X_2$ defined in \eqref{def:X2} is the wealth process corresponding to $(\theta_2, 0, c_2)$.
\end{lemma}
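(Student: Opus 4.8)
The plan is to check each clause of the state-price-deflator and admissibility definitions directly from the explicit formulas, using the BSDE~\eqref{def:bsde} as the bookkeeping device that forces the required drift cancellations. As a preliminary step I would apply It\^o's formula to $A=\exp(a)$ and to $S$ in \eqref{def:A-S}, substituting the $a$-, $Y_1$-, and $Y_2$-drifts from \eqref{def:bsde}, to confirm that $A$ and $S$ obey the conjectured dynamics \eqref{def:A-dynamics} and \eqref{def:S-dynamics} with the coefficients of \eqref{eqn:coefficients}. This is a lengthy but routine computation; the identity $\kappa-\sigma_A=\alpha_1\sigma_D\left(1-Z_2-\tfrac{Z_a}{\alpha_\Sigma}\right)$ together with $\tfrac1{\alpha_\Sigma}=\tfrac1{\alpha_1}+\tfrac1{\alpha_2}$ is exactly what reconciles the $a$-drift with the stated $\mu_A$.

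For the state-price-deflator claim, strict positivity of $\xi$ in \eqref{def:xi} is immediate. I would then compute $d\xi$ by It\^o's formula applied to $\xi=\alpha_1\exp(-\rho_1 t-\alpha_1 c_1)$, using $c_1=\tfrac{a}{\alpha_1}+\tfrac{X_1}{A}+Y_1$ and the dynamics of $a$, $X_1/A$, and $Y_1$. The diffusion coefficient of $c_1$ collapses, via $\tfrac{Z_a}{\alpha_1}-\tfrac{Z_a}{\alpha_\Sigma}=-\tfrac{Z_a}{\alpha_2}$ and $\tfrac{\sigma_S}{A}=\sigma_D\left(1-\tfrac{Z_a}{\alpha_\Sigma}-Z_1-Z_2\right)$, to $\sigma_D\left(1-\tfrac{Z_a}{\alpha_2}-Z_2\right)$, so that $d\xi=-\xi(\tilde r\,dt+\kappa\,dB)$ with $\kappa$ exactly as in \eqref{eqn:coefficients}. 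Rather than simplify $\tilde r$ by brute force, I would apply the It\^o product rule to $\xi A+\int_0^\cdot\xi_u\,du$ and to $\xi S+\int_0^\cdot\xi_u D_u\,du$: their drifts vanish precisely when $\tilde r=\mu_A-\sigma_A\kappa$ and $\mu_S=\kappa\sigma_S$. The second relation is a pure algebraic identity among the coefficients \eqref{eqn:coefficients}, verified by expanding $\sigma_A+\alpha_1\sigma_D Z_1+\tfrac{\alpha_1\sigma_S}{A}$ to $\kappa$; the first follows by matching the drift of $\xi$ against $\mu_A-\sigma_A\kappa$, which the $a$-drift is engineered to produce. Both local-martingale conditions then hold, so $\xi$ is a state price deflator.

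The wealth and self-financing claims reduce to two It\^o computations in which the definition $c_i=\tfrac{a}{\alpha_i}+\tfrac{X_i}{A}+Y_i$ is precisely the drift-matching condition. For agent $2$, since $\psi_2=0$ and $\theta_2:=X_2/A$, the identity $X_2=\theta_2 A$ is definitional, and differentiating \eqref{def:X2} gives $dX_2=\theta_2\,dA-\left(\tfrac{a}{\alpha_2}+Y_2\right)dt$; comparing with the requirement $dX_2=\theta_2(dA+dt)-c_2\,dt$ recovers exactly $c_2=\theta_2+\tfrac{a}{\alpha_2}+Y_2$. For agent $1$, with $\psi_1=1$ and $\theta_1:=(X_1-S)/A$, I would differentiate $X_1=A\cdot H_1$ where $H_1$ is the bracket in \eqref{def:X1}, retaining the cross-variation $d\langle A,H_1\rangle=\sigma_A\sigma_S\,dt$, and compare with $\theta_1(dA+dt)+(dS+D\,dt)-c_1\,dt$; the $dB$ terms agree as $X_1\sigma_A+\sigma_S$, and the $dt$ terms agree if and only if $c_1=\tfrac{a}{\alpha_1}+\tfrac{X_1}{A}+Y_1$.

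Finally, for admissibility I would invoke the regularity recorded just before the lemma: $\psi_1=1$ and $\theta_2\in\sS^\infty$ are bounded, the $Z$-components lie in $\bmo$, and the remaining processes lie in $\sL^q$ for all $q<\infty$, which (with $\int_0^T|c_i|\,dt<\infty$ a.s.) handles progressive measurability and finiteness of the consumption integrals. The crux, and the step I expect to require real care, is $\E\big[\int_0^T|\xi_t c_{i,t}|\,dt\big]<\infty$: here $\xi$ carries the factor $\exp(-\alpha_1 c_1)=\exp(-a-\alpha_1 Y_1)\exp(-\alpha_1 X_1/A)$, whose second factor contains the stochastic integral $\int\tfrac{\sigma_S}{A}\,dB$ with only $\bmo$, not bounded, integrand. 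The plan is to bound $\exp(-a-\alpha_1 Y_1)$ using $a,Y_1\in\sS^\infty$ and to control $\exp(-\alpha_1 X_1/A)$ and its product with $|c_{i,t}|$ through the exponential-moment and reverse-H\"older estimates for $\bmo$ martingales (Theorem 2.2 of Kazamaki~\cite{Kaz94} and the energy inequalities), combined with the $\sL^q$ bounds on the drift pieces and a H\"older split. This $\bmo$-driven exponential-integrability estimate, rather than the algebraic It\^o verifications, is where the essential work lies.
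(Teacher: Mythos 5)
Your overall route is the same as the paper's: verify the dynamics of $A$ and $S$ by It\^o's formula, check the two local-martingale conditions defining a state price deflator, match drifts to recover the self-financing identity, and then handle $\E\big[\int_0^T|\xi_t c_{i,t}|\,dt\big]<\infty$ via the reverse H\"older inequality for the exponential of a $\bmo$ martingale combined with a H\"older split against the $\sL^q$ bounds on $c_i$. That last step is indeed where the paper does its real work (it introduces $M:=\exp\big(-\int_0^\cdot A_u^{-1}du\big)\xi A$ with $dM=-M(\kappa-\sigma_A)dB$ and applies Kazamaki's $(R_p)$ plus Jensen), and your plan identifies the right tools for it.

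There is, however, one concrete gap: the definition of admissibility requires $\theta$ and $\psi$ to be \emph{bounded}, and your plan only invokes ``$\psi_1=1$ and $\theta_2\in\sS^\infty$ are bounded, the remaining processes lie in $\sL^q$.'' The regularity recorded before the lemma places $\theta_1=(X_1-S)/A$ only in $\sL^q$ for all $q<\infty$, since both $X_1$ and $S$ are unbounded; $\sL^q$ membership does not satisfy the boundedness requirement, so as written your argument does not establish that $(\theta_1,\psi_1,c_1)$ is admissible. The paper closes this by computing
$$
  d\theta_1 = d\Big(\tfrac{X_1}{A}\Big)-d\Big(\tfrac{S}{A}\Big)
  = \tfrac{1}{A}\Big(\tfrac{a}{\alpha_2}+Y_2\Big)dt = -d\theta_2 ,
$$
i.e.\ the Brownian parts of $X_1/A$ and $S/A$ (both equal to $\sigma_S/A$) cancel, and the drift simplifies via $\tfrac{S}{A}=D-\tfrac{a}{\alpha_\Sigma}-Y_1-Y_2$ to a uniformly bounded integrand; hence $\theta_1$ is absolutely continuous with bounded density and therefore bounded. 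You should add this computation (or an equivalent one) to your admissibility step; everything else in your outline matches the paper's proof.
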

\begin{proof}
  Let $\big((a,Z_a),(Y_1,Z_1),(Y_2,Z_2)\big)$ be an $\sS^\infty\times\bmo$ solution to \eqref{def:bsde}, and define $r := \mu_A - \kappa \sigma_A$.  Then, $r\in\sL^q$ for all $q\in[1,\infty)$ and
$$
  d\xi = -\xi (r\,dt + \kappa\,dB), \quad \xi_0 = \alpha_1 \exp\left(-\alpha_1 c_{1,0}\right).
$$
Then,
\begin{align*}
  d\left(\xi A + \int_0^\cdot \xi_u du\right) &= -\xi A (\kappa-\sigma_A)dB,\\
  d\left(\xi S + \int_0^\cdot \xi_uD_u du\right) &= \xi \left(\sigma_S -S (\kappa-\sigma_A)\right)dB,
\end{align*}
which shows that $\left(\xi A + \int_0^\cdot \xi_u du\right)$ and $\left(\xi S + \int_0^\cdot \xi_uD_u du\right)$ are local martingales.  Since $\xi$ is progressively measurable and $\xi>0$, we conclude that $\xi$ is a state price deflator.

Next, we verify that $(\theta_1, \psi_1, c_1)$ and $(\theta_2, 0, c_2)$ are admissible with respect to $\xi$.  The processes $\theta_1, \psi_1, c_1, \theta_2, c_2$ are progressively measurable.  Using that $\theta_2, c_2, A\in\sS^\infty$, $\sigma_A, \sigma_S\in\bmo$, and $\mu_A, \mu_S, \theta_1, c_1, S\in\sL^q$ for all $q\in[1,\infty)$ gives us that $\theta_1$ and $\theta_2$ are $\left(A+\int_0^\cdot du\right)$-integrable, $\psi_1$ is $\left(S+\int_0^\cdot D_u du\right)$-integrable, and $c_1$ and $c_2$ are Lebesgue-integrable.

Since $\theta_1:= \frac{X_1-S}{A}$ and $\theta_2:=\frac{X_2}{A}$, we have
$$
  X_1 = \theta_1 A + S = \theta_1 A +\psi_1 S \quad \text{ and } \quad
  X_2 = \theta_2 A.
$$
Calculating the dynamics of $X_1$ using \eqref{def:X1} yields
\begin{align}\label{eqn:X1-dynamics}
\begin{split}
  d X_1 &= A\, d \left(\frac{X_1}{A}\right) + \frac{X_1}{A}dA + d\left<\frac{X_1}{A}, A\right> \\
  &= \left(\mu_S -\sigma_A\sigma_S - \frac{a}{\alpha_1}-Y_1\right)dt + \sigma_SdB + \frac{X_1}{A}\big(A\mu_Adt+A\sigma_AdB -dt\big) + \sigma_A\sigma_S dt \\
  &= \mu_S dt + \sigma_S dB - c_1 dt + X_1\left(\mu_Adt + \sigma_A dB\right)\\
  &= 1 (dS + D dt) + \theta_1 (dA + dt) - c_1 dt,
  \end{split}
\end{align}
while calculating the dynamics of $X_2$ using \eqref{def:X2} yields
\begin{align*}
  d X_2 &= A\, d \left(\frac{X_2}{A}\right) + \frac{X_2}{A}dA + d\left<\frac{X_2}{A}, A\right> \\
  &= -\left(\frac{a}{\alpha_2}+Y_2\right)dt + \frac{X_2}{A}\big(dA + dt - dt\big)\\
  &= \theta_2(dA + dt) - c_2 dt.
\end{align*}
Since $a, Y_2\in\sS^\infty$ and $d\theta_2 = -\frac{1}{A}\left(\frac{a}{\alpha_2}+Y_2\right)dt$, $\theta_2$ is bounded.  We see that $\theta_1$ is bounded by calculating its dynamics
\begin{align*}
  d\theta_1
  &= d\left(\frac{X_1}{A}\right) - d\left(\frac{S}{A}\right)\\
  &= \frac{1}{A}\left(-\frac{a}{\alpha_1} - Y_1-\frac{S}{A}+D\right)dt \\
  &= \frac{1}{A}\left(-\frac{a}{\alpha_1} - Y_1+\frac{a}{\alpha_\Sigma}+Y_1+Y_2\right)dt \\
  &= \frac{1}{A}\left(\frac{a}{\alpha_2}+Y_2\right)dt \\
  &= -d\theta_2.
\end{align*}
Since $\theta_1$, $\theta_2$, and $\psi_1 = 1$ are bounded, it remains to prove that
$$
  \E\left[\int_0^T |\xi_uc_{1,u}|du\right]<\infty \quad \text{ and } \quad
  \E\left[\int_0^T |\xi_uc_{2,u}|du\right]<\infty.
$$
To this end, we recall that $a\in\sS^\infty$, so $A=\exp(a)$ is continuous and uniformly bounded from above and below away from zero.  We define
$$
  M:= \exp\left(-\int_0^\cdot \frac{1}{A_u}du\right)\xi A.
$$ 
Then, the terms in $M$ are sufficiently regular to calculate its dynamics, and we have $dM = -M(\kappa-\sigma_A)dB$.  Since $\kappa-\sigma_A\in\bmo$, Theorem 3.1 in Kazamaki~\cite{Kaz94} guarantees that $M$ is a martingale and there exists $p>1$ for which $M$ satisfies the reverse H\"older inequality $(R_p)$.  That is, there exist constants $p>1$ and $C>0$ such that for all stopping times $\tau$,
$$
  \E\left[M^p_T\,|\,\sF_\tau\right] \leq C M^p_\tau.
$$
By Jensen's inequality, for all stopping times $\tau$, we have
\begin{align*}
  \E\left[M^p_\tau\right] &= \E\left[\left(\E\left[M_T\,|\,\sF_\tau\right]\right)^p\right] \leq \E\left[M^p_T\right]\leq C M^p_0.
\end{align*}
We define $q>1$ by $\frac{1}{p}+\frac{1}{q} = 1$ and notice that $\xi = \frac{M\exp\left(\int_0^\cdot \frac{1}{A_u}du\right)}{A}$ and that $\frac{\exp\left(\int_0^\cdot \frac{1}{A_u}du\right)}{A}\in\sS^\infty$.  By H\"older's inequality, there exists a constant $C>0$ such that for $i\in\{1,2\}$ and all stopping times $\tau$, 
\begin{align*}
  \E\left[\int_0^T \left|\xi_t c_{i,t}\right|dt\right] 
  &= \E\left[\int_0^T \frac{M_t\exp\left(\int_0^t \frac{1}{A_u}du\right)}{A_t} \left|c_{i,t}\right|dt\right] \\
  &\leq C \left(\E\left[\int_0^T M_t^p\right]\right)^{\frac{1}{p}} \left(\E\left[\int_0^T\left|c_{i,t}\right|^q\right]\right)^{\frac{1}{q}}\\
  &\leq C\left(\E\left[\int_0^T\left|c_{i,t}\right|^q\right]\right)^{\frac{1}{q}}.
\end{align*}
Since $c_1, c_2\in\sL^{q'}$ for all $q'\in[1,\infty)$, we have $\E\left[\int_0^T |\xi_uc_{1,u}|du\right]<\infty$ and $\E\left[\int_0^T |\xi_uc_{2,u}|du\right]<\infty$, as desired.

Therefore, $(\theta_1, \psi_1, c_1)$ and $(\theta_2, 0, c_2)$ are admissible with respect to $\xi$, and their associated wealth processes are $X_1$ and $X_2$, respectively.

\end{proof}

\begin{lemma}\label{lemma:mg}
  Suppose that Assumption \ref{asm:state-process} holds and that there exists an $\sS^\infty\times\bmo$ solution $\big((a,Z_a),(Y_1,Z_1),(Y_2,Z_2)\big)$ to the BSDE system~\eqref{def:bsde}.  For $\xi$ given by \eqref{def:xi},
  the following processes are martingales:
  $$
    \left(\xi A + \int_0^\cdot \xi_u du\right), \quad
    \left(\xi X_2 + \int_0^\cdot \xi_u c_{2,u}du\right), \quad \text{and } \quad 
    \left(\xi S + \int_0^\cdot \xi_u D_{u}du\right),
  $$
  and the collection of random variables $\left(\xi_\tau S_\tau\right)_\tau$, where $\tau$ ranges over all stopping times, is uniformly integrable.  
  
  Furthermore, if $c_1+c_2 = 1+D$ and $X_1 + X_2 = S + A$, then $\left(\xi X_1 + \int_0^\cdot \xi_u c_{1,u}du\right)$ is a martingale.
\end{lemma}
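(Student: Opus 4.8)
The plan is to treat the three local martingales in two groups according to the integrability of their asset terms, and then obtain the final ``furthermore'' claim by linearity. Throughout I would rely on the martingale $M = \exp\!\big(\int_0^\cdot A_u^{-1}du\big)\,\xi A$ from the proof of Lemma~\ref{lemma:xi}, together with its two key properties: it satisfies the reverse H\"older inequality $(R_p)$ for some $p>1$, so that $\sup_\tau\E[M_\tau^p]\le CM_0^p$ and, by Doob's inequality, $\sup_t M_t\in\sL^p$; and, because $A\in\sS^\infty$ is bounded away from $0$ and $\int_0^\cdot A_u^{-1}du$ is bounded, both $\xi A$ and $\xi$ are bounded multiples of $M$.

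First I would handle $\big(\xi A+\int_0^\cdot\xi_u\,du\big)$ and $\big(\xi X_2+\int_0^\cdot\xi_u c_{2,u}\,du\big)$. These are local martingales: the first by the state-price-deflator property established in Lemma~\ref{lemma:xi}, and the second because $(\theta_2,0,c_2)$ is admissible with wealth $X_2$, so a short It\^o computation (using $r=\mu_A-\sigma_A\kappa$) shows that the drift of $\xi X_2$ is exactly $-\xi c_2$. Since $A,X_2,c_2\in\sS^\infty$, for every stopping time $\tau$ I can bound $\big|\xi_\tau A_\tau\big|,\big|\xi_\tau X_{2,\tau}\big|\le CM_\tau\le C\sup_t M_t$, while $\int_0^\tau\xi_u\,du$ and $\int_0^\tau\xi_u|c_{2,u}|\,du$ are both at most $C\int_0^T M_u\,du$. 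Both dominating variables lie in $\sL^1$ --- $\sup_t M_t$ by Doob, and $\int_0^T M_u\,du$ because $\E[\int_0^T M_u\,du]=\int_0^T M_0\,du=TM_0$ by Tonelli --- so each family $\{L_\tau\}_\tau$ is dominated by an integrable random variable, hence uniformly integrable, and each local martingale is therefore a true martingale.

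The stock term is the crux, because $S$ and $D$ are only in $\sL^q$ and the simple domination above fails. Writing $\xi S=\xi A\big(D-\tfrac{a}{\alpha_\Sigma}-Y_1-Y_2\big)$ and using $a,Y_1,Y_2\in\sS^\infty$, I get $|\xi_\tau S_\tau|\le CM_\tau(|D_\tau|+C')$. To prove that $(\xi_\tau S_\tau)_\tau$ is uniformly integrable I would show it is bounded in $\sL^{1+\delta}$ for a small $\delta>0$: by H\"older with exponents chosen so that $M$ is raised to the power $p$ (namely $(1+\delta)s=p$, which forces $\delta\le p-1$, so one may take $\delta=(p-1)/2$), one has $\E[|\xi_\tau S_\tau|^{1+\delta}]\le C\,\E[M_\tau^{p}]^{1/s}\,\E[(|D_\tau|+C')^{(1+\delta)s'}]^{1/s'}$; the first factor is bounded uniformly in $\tau$ by $(R_p)$, and the second because $\sup_t|D_t|$ has moments of all orders (the dividend has bounded drift and volatility, so BDG applies). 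This gives $\sup_\tau\E[|\xi_\tau S_\tau|^{1+\delta}]<\infty$ and hence the uniform integrability claimed in the lemma. For the martingale property of $\big(\xi S+\int_0^\cdot\xi_u D_u\,du\big)$ --- a local martingale by Lemma~\ref{lemma:xi} --- it remains to note that $\int_0^\tau\xi_u|D_u|\,du\le\int_0^T\xi_u|D_u|\,du$, with $\E[\int_0^T\xi_u|D_u|\,du]\le C\int_0^T\E[M_u^p]^{1/p}\E[|D_u|^{p'}]^{1/p'}\,du<\infty$, so this integral term is also dominated by an integrable variable. The sum of the two uniformly integrable families is uniformly integrable, so the local martingale is of class (D) and therefore a true martingale.

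Finally, under the market-clearing hypotheses $c_1+c_2=1+D$ and $X_1+X_2=S+A$, so that $X_1=S+A-X_2$ and $c_1=1+D-c_2$, I would write
\[
  \xi X_1+\int_0^\cdot\xi_u c_{1,u}\,du
  =\Big(\xi S+\int_0^\cdot\xi_u D_u\,du\Big)
  +\Big(\xi A+\int_0^\cdot\xi_u\,du\Big)
  -\Big(\xi X_2+\int_0^\cdot\xi_u c_{2,u}\,du\Big),
\]
so the left-hand side is a linear combination of the three martingales already established and is therefore itself a martingale. The main obstacle is the stock term: unlike the bounded annuity and agent-$2$ quantities, $S$ and $D$ are unbounded, so upgrading from the local-martingale property to a genuine martingale requires the quantitative reverse-H\"older estimate on $M$ together with the all-order moment bounds on $D$ to produce the $\sL^{1+\delta}$ bound, and care is needed because the exponent $p>1$ furnished by $(R_p)$ may be close to $1$.
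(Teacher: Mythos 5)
Your proof is correct and follows essentially the same route as the paper: the auxiliary martingale $M$ built from $\xi A$, the reverse H\"older inequality to get an $\sL^{1+\delta}$ bound on $(\xi_\tau S_\tau)_\tau$ combined with all-order moments of $D$, and the linear-combination identity for the ``furthermore'' claim. The only (harmless) variation is that for the two bounded-asset terms you deduce martingality by dominating the stopped family with $\sup_t M_t$ (Doob) and $\int_0^T M_u\,du$, whereas the paper bounds $\E[\langle\cdot\rangle_T^{1/2}]$ and invokes Burkholder--Davis--Gundy.
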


\begin{proof}
  Let $\big((a,Z_a),(Y_1,Z_1),(Y_2,Z_2)\big)$ be an $\sS^\infty\times\bmo$ solution to \eqref{def:bsde}.  For $r:=\mu_A - \kappa\sigma_A$, we see that $\xi$ has dynamics
$$
  d\xi = -\xi (r dt + \kappa dB).
$$
The processes $\left(\xi A + \int_0^\cdot \xi_u du\right)$, $\left(\xi X_2 + \int_0^\cdot \xi_u c_{2,u}du\right)$, and $\left(\xi S + \int_0^\cdot \xi_u D_{u}du\right)$ have local martingale dynamics:
  \begin{align*}
  d\left(\xi A + \int_0^\cdot \xi_u du\right) 
    &= -\xi A\left(\kappa-\sigma_A\right)dB, \\
  d\left(\xi X_2 + \int_0^\cdot \xi_u c_{2,u} du\right) 
    &= -\xi X_2\left(\kappa-\sigma_A\right)dB, \\
  d\left(\xi S + \int_0^\cdot \xi_u D_{u} du\right) 
    &= \xi \left(\sigma_S - S(\kappa-\sigma_A)\right)dB.
\end{align*}
It remains to show that $\left(\xi A + \int_0^\cdot \xi_u du\right)$, $\left(\xi X_2 + \int_0^\cdot \xi_u c_{2,u}du\right)$, and $\left(\xi S + \int_0^\cdot \xi_u D_{u}du\right)$ are martingales and not merely local martingales.  To this end, we recall that $a\in\sS^\infty$, so $A=\exp(a)$ is continuous and uniformly bounded from above and below away from zero.  As we saw in the proof of Lemma~\ref{lemma:xi}, we define
$$
  M:= \exp\left(-\int_0^\cdot \frac{1}{A_u}du\right)\xi A.
$$ 
Then, the terms in $M$ are sufficiently regular to calculate $M$'s dynamics, and we have $dM = -M(\kappa-\sigma_A)dB$.  Since $\kappa-\sigma_A\in\bmo$, Theorem 3.1 in Kazamaki~\cite{Kaz94} implies that $M$ is a martingale with $\E\left[\langle M\rangle_T^{\frac12}\right]<\infty$. We calculate that
\begin{align*}
d\left(\xi A + \int_0^\cdot \xi_u du\right) 
    &= \exp\left(\int_0^\cdot \frac{1}{A_u}du\right) dM, \\
  d\left(\xi X_2 + \int_0^\cdot \xi_u c_{2,u} du\right) 
    &= \frac{\exp\left(\int_0^\cdot \frac{1}{A_u}du\right)X_2}{A} dM. \\
\end{align*}
Since $\exp\left(\int_0^\cdot \frac{1}{A_u}du\right)$ and $\frac{\exp\left(\int_0^\cdot \frac{1}{A_u}du\right)X_2}{A}$ are in $\sS^\infty$, there exists a constant $C>0$ such that
\begin{align*}
  \E\left[\left\langle \xi A + \int_0^\cdot \xi_udu\right\rangle_T^{\frac12}\right]
    &\leq C\, \E\left[\langle M\rangle_T^{\frac12}\right]<\infty,\\
    \E\left[\left\langle \xi X_{2,u} + \int_0^\cdot \xi_uc_{2,u}du\right\rangle_T^{\frac12}\right]
    &\leq C\, \E\left[\langle M\rangle_T^{\frac12}\right]<\infty.
\end{align*}
The Burkholder-Davis-Gundy inequality implies that $\left(\xi A + \int_0^\cdot \xi_u du\right)$ and $\left(\xi X_2 + \int_0^\cdot \xi_u c_{2,u}du\right)$ are martingales.

Next, we show that the local martingale $\left(\xi S + \int_0^\cdot \xi_u D_u du\right)$ is a martingale.  Since $S$ and $D$ are unbounded, we cannot prove martingality of $\left(\xi S + \int_0^\cdot \xi_u D_u du\right)$ in the same manner as before.  Instead, we will show that the families of random variables
$$
\left(\xi_\tau S_\tau\right)_\tau \quad \text{and} \quad
\left(\int_0^\tau \xi_u D_u du\right)_\tau
$$
are uniformly integrable when indexed over all stopping times $\tau$.  This uniform integrability will imply the martingality we've claimed.

Theorem 3.4 in Kazamaki~\cite{Kaz94} guarantees that there exists $p>1$ for which $M$ satisfies the reverse H\"older inequality $(R_p)$.  That is, there exist constants $p>1$ and $C>0$ such that for all stopping times $\tau$,
$$
  \E\left[M^p_T\,|\,\sF_\tau\right] \leq C M^p_\tau.
$$
Then, by Jensen's inequality, for all stopping times $\tau$, we have
\begin{align}\label{eqn:M-Jensen}
  \E\left[M^p_\tau\right] &= \E\left[\left(\E\left[M_T\,|\,\sF_\tau\right]\right)^p\right] \leq \E\left[M^p_T\right]\leq C M^p_0.
\end{align}
We define $\eps:=\sqrt{p}-1$ so that $\eps>0$ and $(1+\eps)^2 = p$.  We define $q>1$ by $\frac{1}{1+\eps}+\frac{1}{q} = 1$.  We note that $\xi S = M\exp\left(\int_0^\cdot \frac{1}{A_u}du\right) \left(D-\frac{a}{\alpha_\Sigma}-Y_1-Y_2\right)$.  Then for all stopping times $\tau$, we have
\begin{align*}
  \E\left[\left|\xi_\tau S_\tau\right|^{1+\eps}\right] 
  &= \E\left[M^{1+\eps}_\tau \left(\exp\left(\int_0^\tau \frac{1}{A_u}du\right)\left|D_\tau-\frac{a_\tau}{\alpha_\Sigma}-Y_{1,\tau}-Y_{2,\tau}\right|\right)^{1+\eps}\right] \\
  &\leq \E\left[M^p_\tau\right]^{\frac{1}{1+\eps}} \E\left[\left(\exp\left(\int_0^\tau \frac{1}{A_u}du\right)\left|D_\tau-\frac{a_\tau}{\alpha_\Sigma}-Y_{1,\tau}-Y_{2,\tau}\right|\right)^{(1+\eps)q}\right]^{\frac{1}{q}} \\
  &\leq \left(C M^p_0\right)^{\frac{1}{1+\eps}} \E\left[\left(\exp\left(\int_0^\tau \frac{1}{A_u}du\right)\left|D_\tau-\frac{a_\tau}{\alpha_\Sigma}-Y_{1,\tau}-Y_{2,\tau}\right|\right)^{(1+\eps)q}\right]^{\frac{1}{q}}.
\end{align*}
Using the $\sS^\infty$ bounds for $a$, $Y_1$, and $Y_2$, and the reverse H\"older constants $C$ and $p$, there exists a constant $C'>0$ such that for all stopping times $\tau$, we have
$$
  \E\left[\left|\xi_\tau S_\tau\right|^{1+\eps}\right] 
  \leq C'\left(1+\E\left[\left|D_\tau\right|^{(1+\eps)q}\right]^{\frac{1}{q}}\right).
$$
Under Assumption \ref{asm:state-process}, $\mu_D$ is bounded and so, there exists $C>0$ (which is possibly different from the constant $C$ before) such that for all stopping times $\tau$,
$$
  |D_\tau|^{(1+\eps)q} \leq C\left(1+\left|\int_0^\tau \sigma_{D,u}dB_u\right|^{(1+\eps)q}\right).
$$
The process $\sigma_D$ is bounded by $M$ by Assumption \ref{asm:state-process}.  By the Burkholder-Davis-Gundy inequality, there exists a constant $C>0$ (which is possibly different from the constant $C$ before) such that for all stopping times $\tau$,
\begin{align*}
  \E\left[|D_\tau|^{(1+\eps)q}\right]
     &\leq C\left(1+\E\left[\left(\int_0^\tau \sigma_{D,u}^2 du\right)^{\frac{(1+\eps)q}{2}}\right]\right) \\
     &\leq C\left(1+\E\left[\left(\int_0^T \sigma_{D,u}^2du\right)^{\frac{(1+\eps)q}{2}}\right]\right) \\
     &\leq C\left(1+\left(M^2T\right)^{\frac{(1+\eps)q}{2}}\right).
\end{align*}
Since this bound does not depend on the stopping time $\tau$, we have that $(\xi_\tau S_\tau)_\tau$ is a uniformly integrable family of random variables.

We may apply similar estimates to $\left(\int_0^\tau \xi_uD_udu\right)_\tau$ by noticing that
$$
  \xi D = \left(\xi A\exp\left(-\int_0^\cdot\frac{1}{A_u}du\right)\right) \frac{D\exp\left(\int_0^\cdot\frac{1}{A_u}du\right)}{A} = M \frac{D\exp\left(\int_0^\cdot\frac{1}{A_u}du\right)}{A}.
$$
Thus, $\left(\xi S + \int_0^\cdot \xi_u D_{u} du\right)$ is a local martingale and $\left(\xi_\tau S_\tau\right)_\tau$ and $\left(\int_0^\tau \xi_u D_u du\right)_\tau$ are uniformly integrable, which implies that $\left(\xi S + \int_0^\cdot \xi_u D_{u} du\right)$ is a martingale.

Lastly, we assume that the market clearing assumptions hold:  $c_1+c_2 = 1+D$ and $X_1+X_2 = S+A$. Then, we have 
$$
  \xi X_1 + \int_0^\cdot \xi_u c_{1,u}du
  = \xi (S+A-X_2) + \int_0^\cdot \xi_u(1+D_u-c_{2,u})du,
$$
which shows that $\left(\xi X_1 + \int_0^\cdot \xi_u c_{1,u}du\right)$ is the linear combination of three martingales, and so $\left(\xi X_1 + \int_0^\cdot \xi_u c_{1,u}du\right)$ is a martingale.

\end{proof}

Lemma~\ref{lemma:mg} proved martingale properties for $A$, $S$, $X_1$, and $X_2$ when applied to $\xi$.  Next, we extend these martingale properties to arbitrary admissible wealth processes.  We remind the reader that the state price deflator $\xi$ from \eqref{def:xi} is defined in terms of $c_1$, which is agent $1$'s candidate optimal consumption stream.  However, the $c$ that appears in $(\theta, \psi, c)\in\sA(\xi, x)$ is arbitrary, so long as it satisfies the admissibility criteria.
\begin{lemma}\label{lemma:mg-any-wealth}
  Suppose that Assumption \ref{asm:state-process} holds and that there exists an $\sS^\infty\times\bmo$ solution $\big((a,Z_a),(Y_1,Z_1),(Y_2,Z_2)\big)$ to the BSDE system~\eqref{def:bsde}.   Let $(\theta,\psi,c)$ be admissible with respect to the state price deflator $\xi$ given by \eqref{def:xi}, and denote the associated wealth process by $X$.  Then $\left(\xi X + \int_0^\cdot \xi_u c_{u}du\right)$ is a martingale.
\end{lemma}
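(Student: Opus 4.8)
The plan is to show that $N := \xi X + \int_0^\cdot \xi_u c_u\,du$ is a continuous local martingale and then to upgrade it to a true martingale through a class (D) argument, reusing the uniform integrability estimates already packaged in Lemma~\ref{lemma:mg}. The local martingale property is essentially bookkeeping, so the genuine work lies in the second step, where the unboundedness of $S$ (and hence of $X = \theta A + \psi S$) and the arbitrariness of the admissible $c$ must be controlled.

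For the first step, I would apply It\^o's product rule to $\xi X$, substitute the self-financing condition $dX = \theta(dA + dt) + \psi(dS + D\,dt) - c\,dt$ together with $d\xi = -\xi(r\,dt + \kappa\,dB)$ for $r := \mu_A - \kappa\sigma_A$, and insert the dynamics \eqref{def:A-dynamics} and \eqref{def:S-dynamics}. Collecting terms, the entire drift of $d(\xi X) + \xi c\,dt$ reduces to a multiple of
\[
  \theta A\left(-r + \mu_A - \kappa\sigma_A\right) + \psi S\left(-r + \mu_A - \kappa\sigma_A\right) + \psi\left(\mu_S - \kappa\sigma_S\right),
\]
and the state price deflator relations $r = \mu_A - \kappa\sigma_A$ and $\mu_S = \kappa\sigma_S$ force every bracket to vanish. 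Equivalently, and more transparently, one verifies directly that
\[
  dN = \theta\,d\left(\xi A + \int_0^\cdot \xi_u\,du\right) + \psi\,d\left(\xi S + \int_0^\cdot \xi_u D_u\,du\right),
\]
exhibiting $N$ as a stochastic integral of the bounded integrands $\theta$ and $\psi$ against the two continuous local martingales identified in Lemma~\ref{lemma:mg}; in particular $N$ is a continuous local martingale with $N_0 = \xi_0 X_0$ integrable.

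The main obstacle is promoting $N$ to a true martingale. I would show that $N$ is of class (D), i.e. that $(N_\tau)_\tau$ is uniformly integrable as $\tau$ ranges over all $[0,T]$-valued stopping times. Writing $\xi X = \theta(\xi A) + \psi(\xi S)$ and using that $\theta$ and $\psi$ are bounded, the family $(\xi_\tau X_\tau)_\tau$ is dominated by $\|\theta\|_\infty|\xi_\tau A_\tau| + \|\psi\|_\infty|\xi_\tau S_\tau|$. Lemma~\ref{lemma:mg} already establishes that $(\xi_\tau S_\tau)_\tau$ is uniformly integrable, while the reverse H\"older inequality for $M = \exp\!\left(-\int_0^\cdot \frac{1}{A_u}\,du\right)\xi A$ shows that $(\xi_\tau A_\tau)_\tau$ is bounded in $L^p$ for some $p>1$ and hence uniformly integrable; therefore $(\xi_\tau X_\tau)_\tau$ is uniformly integrable. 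Since admissibility with respect to $\xi$ gives $\E\!\left[\int_0^T|\xi_u c_u|\,du\right]<\infty$, the family $\left(\int_0^\tau \xi_u c_u\,du\right)_\tau$ is dominated by the single integrable random variable $\int_0^T|\xi_u c_u|\,du$ and is thus uniformly integrable as well. Adding the two families shows $(N_\tau)_\tau$ is uniformly integrable.

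Finally, a continuous local martingale of class (D) is a uniformly integrable, hence true, martingale, which yields the claim. I expect the drift cancellation to be routine, merely re-encoding the deflator relations, whereas the real difficulty is the uniform integrability: because $S$ is unbounded, the elementary Burkholder--Davis--Gundy argument that sufficed for the bounded quantities $A$ and $X_2$ in Lemma~\ref{lemma:mg} is unavailable, and one must instead lean on the reverse H\"older inequality for $M$ and on the uniform-integrability conclusions for $(\xi_\tau S_\tau)_\tau$ already supplied by Lemma~\ref{lemma:mg}.
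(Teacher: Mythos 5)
Your proposal is correct and follows essentially the same route as the paper: exhibit $\xi X+\int_0^\cdot\xi_u c_u\,du$ as a continuous local martingale via the deflator relations $r=\mu_A-\kappa\sigma_A$ and $\mu_S=\kappa\sigma_S$, then upgrade to a true martingale by showing $(\xi_\tau X_\tau)_\tau$ and $\bigl(\int_0^\tau\xi_u c_u\,du\bigr)_\tau$ are uniformly integrable using $X=\theta A+\psi S$ with $\theta,\psi$ bounded, the uniform integrability of $(\xi_\tau S_\tau)_\tau$ from Lemma~\ref{lemma:mg}, and the admissibility bound $\E[\int_0^T|\xi_u c_u|\,du]<\infty$. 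Your decomposition $dN=\theta\,d(\xi A+\int_0^\cdot\xi_u\,du)+\psi\,d(\xi S+\int_0^\cdot\xi_u D_u\,du)$ and the reverse H\"older justification for $(\xi_\tau A_\tau)_\tau$ are harmless cosmetic variants of the paper's argument.
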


\begin{proof}
  Let $\big((a,Z_a),(Y_1,Z_1),(Y_2,Z_2)\big)$ be an $\sS^\infty\times\bmo$ solution to \eqref{def:bsde}.  By Lemma~\ref{lemma:xi}, $\xi$ is a state price deflator.  Suppose that $(\theta, \psi, c)$ are admissible with respect to $\xi$, and let the associated wealth process be denoted by $X$. We have
\begin{align*}
  d&\left(\xi X +\int_0^\cdot \xi_u c_u du\right)\\
  &= \xi\Big(\theta(dA+dt) + \psi(dS+D\,dt)-X(r\,dt + \kappa\,dB) - \kappa\left(\theta A\sigma_A+\psi(\sigma_S + S\sigma_A)\right)dt\Big) \\
  &= \xi\Big(X(\sigma_A-\kappa)dB
    + \psi\left((\mu_S-\kappa\sigma_S)dt + \sigma_S dB\right)\Big)\\
  &=\xi\Big(X(\sigma_A-\kappa)
    + \psi\sigma_S\Big)dB, \quad \text{ since $\mu_S = \kappa\sigma_S$.}
\end{align*}
Thus, the process $\big(\xi X + \int_0^\cdot \xi_uc_udu\big)$ is a local martingale, which we seek to show is a martingale.  We proceed by proving that the families of random variables:
$$
\left(\xi_\tau X_\tau\right)_\tau \quad \text{and} \quad
\left(\int_0^\tau \xi_u c_u du\right)_\tau
$$
are uniformly integrable when indexed over all stopping times $\tau$.  This uniform integrability will imply the martingality we've claimed.

We recall that $\theta$, $\psi$, $A$ are bounded and $X=\theta A + \psi S$.  Since $\xi$ is a (uniformly integrable) martingale and Lemma~\ref{lemma:mg} gives us that $(\xi_\tau S_\tau)_\tau$ is uniformly integrable, then $(\xi_\tau X_\tau)_\tau$ is a uniformly integrable family of random variables.

To see that $\left(\int_0^\tau \xi_u c_u du\right)_\tau$ is uniformly integrable, we recall that the definition of admissibility with respect to $\xi$ ensures that $\E\left[\int_0^T|\xi_uc_u|du\right]<\infty$.  Then, for all stopping times $\tau$,
$$
  \left|\int_0^\tau \xi_u c_u du\right| \leq \int_0^T \left|\xi_u c_u\right| du
$$
implies that $\left(\int_0^\tau \xi_u c_u du\right)_\tau$ is uniformly integrable.

Thus, we have shown that $\left(\xi X + \int_0^\cdot \xi_u c_u du\right)$ is a local martingale and the collections of random variables $\left(\xi_\tau X_\tau\right)_\tau$ and $\left(\int_0^\tau \xi_u c_u du\right)_\tau$ are uniformly integrable, from which we conclude that $\left(\xi X + \int_0^\cdot \xi_u c_u du\right)$ is a martingale.

\end{proof}

\begin{proof}[Proof of Theorem \ref{thm:char}] 
Let $\big((a,Z_a),(Y_1,Z_1),(Y_2,Z_2)\big)$ be an $\sS^\infty\times\bmo$ solution to \eqref{def:bsde}.  By Lemma \ref{lemma:xi}, $\xi$ defined in \eqref{def:xi} is a state price deflator, and the strategies $(\theta_1, \psi_1, c_1)$ and $(\theta_2, 0, c_2)$ are admissible with respect to $\xi$.  We proceed to verify each condition of a Radner equilibrium with limited participation.

\ \\ 
\noindent{\it Market clearing holds.} One can verify by using the dynamics of $\frac{X_1}{A}$ and $\frac{X_2}{A}$ that
\begin{align*}
  d\left(\frac{X_1+X_2}{A}\right)
  &= \frac{1}{A}\left\{\mu_S -\sigma_A\sigma_S-\frac{a}{\alpha_\Sigma}-Y_1-Y_2\right\}dt + \frac{\sigma_S}{A}dB\\
  &= \frac{1}{A}\left\{\mu_S -\sigma_A\sigma_S+\frac{S}{A}-D\right\}dt + \frac{\sigma_S}{A}dB\\
  &= d\left(\frac{S}{A}\right) = d\left(\frac{S+A}{A}\right).
\end{align*}
Since $X_1$ and $X_2$ are continuous and $\theta_{1,0-}+\theta_{2,0-}=1$, we have $X_{1,0}+X_{2,0} = A_0(\theta_{1,0-}+\theta_{2,0-})+\psi_{1,0-}S_0 = A_0+S_0$.  Thus,
\begin{equation}\label{eqn:wealth-clearing}
  \frac{X_1+X_2}{A} = \frac{S+A}{A}.
\end{equation}
Clearing in the consumption market also holds:
$$
  c_1+c_2 = \frac{a}{\alpha_\Sigma}+Y_1+Y_2 +\frac{X_1+X_2}{A} 
  = D-\frac{S}{A}+\frac{S+A}{A} = D+1.
$$
Finally, $\theta_1+\theta_2 = \frac{X_1}{A}-\frac{S}{A}+\frac{X_2}{A} = 1$ and $\psi_1=1$ gives us market clearing in Definition~\ref{def:equilibrium}, as desired.

\ \\ 

\noindent{\it Optimality for agent $1$.}  
We prove agent $1$'s optimality by a duality argument.  By Lemma~\ref{lemma:xi}, the strategies $(\theta_1, \psi_1, c_1)$ are admissible with respect to $\xi$, while $X_1$ defined in \eqref{def:X1} is the corresponding wealth process.  We denote
$$
  U(t,c) := -\exp\left(-\rho_1 t -\alpha_1 c\right), \quad t\in[0,T],\ c\in\R,
$$
and define the Fenchel-Legendre transform of $-U(t, -c)$ by
$$
  \widetilde U(t,y) := \sup_{c\in\R} \left\{ U(t,c) - yc\right\}, \quad t\in[0,T],\ c\in\R.
$$
The functions $U$ and $\widetilde U$ are related by
$U(t, c) = \widetilde U\left(t, \frac{\partial}{\partial c}U(t,c)\right) + c \frac{\partial}{\partial c}U(t,c)$ for all  $t\in[0,T]$, $c\in\R$.  Moreover, by the definition of $\xi$, we have that
$$
  \xi_T = \frac{\partial}{\partial c}U(t,X_{1,T}) \quad \text{ and } \quad
  \xi_t = \frac{\partial}{\partial c}U(t,c_{1,t}), \quad 
  t\in[0,T].
$$
Putting these calculations together gives us
\begin{equation}\label{eqn:duality}
  \widetilde U(T, \xi_T) + \xi_T X_{1,T} = U(T, X_{1,T}) \quad \text{ and } \quad 
  \widetilde U(t, \xi_t) + \xi_t c_{1,t} = U(t, c_{1,t}), \quad t\in[0,T].
\end{equation}
Now, let $(\theta, \psi, c)$ be strategies that are admissible with respect to $\xi$, with $\theta_{0-} = \theta_{1,0-}$ and $\psi_{0-}=1$.  Since the corresponding wealth process $X$ is continuous, we have $X_0 = X_{0-} = \theta_{0-}A_0 + \psi_{0-}S_0 = X_{1,0}$.  Then,
\begin{align*}
  \E&\left[U(T, X_{T}) + \int_0^T U(T, c_{t})dt\right] \\
  &\leq \E\left[\widetilde U(T,\xi_T)+\xi_TX_T +\int_0^T\left(\widetilde U(t,\xi_t)+\xi_tc_t\right)dt\right] \quad \text{by definition of $\widetilde U$ and $\xi>0$}\\
  &= \E\left[\widetilde U(T,\xi_T) +\int_0^T\widetilde U(t,\xi_t)dt\right] + \xi_0X_{1,0} \\
  & \quad \quad \quad 
    \quad \text{since $\left(\xi X + \int_0^\cdot \xi_uc_udu\right)$ is a martingale by Lemma \ref{lemma:mg-any-wealth}}\\
  &= \E\left[\widetilde U(T,\xi_T)+\xi_TX_{1,T} +\int_0^T\left(\widetilde U(t,\xi_t)+\xi_t c_{1,t}\right)dt\right] \\
  & \quad \quad \quad
    \quad \text{since $\left(\xi X_1 + \int_0^\cdot \xi_uc_{1,u}du\right)$ is a martingale by market clearing and Lemma \ref{lemma:mg}}\\ 
  &= \E\left[U(T, X_{1,T}) + \int_0^T U(T, c_{1,t})dt\right] \quad \text{by \eqref{eqn:duality}}.
\end{align*}
Therefore, the strategies $(\theta_1, \psi_1, c_1)$ and wealth $X_1$ are optimal for agent $1$'s problem, \eqref{def:optimal-1}.

\ \\

\noindent{\it Optimality for agent $2$.}  Let strategies $(\theta, 0,c)$ be admissible with respect to $\xi$ with $\theta_{0,-}=\theta_{2,0-}$.  Let $X$ be the corresponding wealth process.  We define $E := \exp\left(-\alpha_2\left(\frac{X}{A}+Y_{2}\right)\right)$ and
\begin{align*}
  V_t := -E_t\exp(-\rho_2t) - \int_0^t \exp(-\alpha_2 c_u-\rho_2u)du, \quad t\in[0,T].
\end{align*}
Notice that since $X$ is continuous at zero, we have $V_0 = -E_0 = \exp\left(-\alpha_2\left(\theta_{2,0-}+Y_{2,0}\right)\right)$.  The process $V$ has dynamics $dV_t = e^{-\rho_2 t}\left(\mu_{V,t} dt + \sigma_{V,t} dB_t\right)$ where $\sigma_{V} = \alpha_2 E \sigma_D Z_{2}$ and
$$
  \mu_{V} = -e^{-\alpha_2 c} + E\left\{\frac{\alpha_2}{A}\left(\frac{X}{A}+Y_2\right) + \frac{1+a-\alpha_2 c}{A}\right\}.
$$
Next, we apply Fenchel's inequality to $\mu_V$ for the function $x\mapsto -e^{-\alpha_2 x}$ for $x\in\R$, which says that for all $x\in\R$,
$$
  -e^{-\alpha_2 x} \leq \frac{y}{\alpha_2}\left(\log\left(\frac{y}{\alpha_2}\right)-1\right) + xy, \quad \text{for all $y>0$}.
$$
Choosing $y = \frac{\alpha_2 E}{A}$ and using that $A= \exp(a)$ and $E=\exp\left(-\alpha_2\left(\frac{X}{A}+Y_2\right)\right)$ gives us 
\begin{align*}
  \mu_V &= -e^{-\alpha_2 c} +E\left\{\frac{\alpha_2}{A}\left(\frac{X}{A}+Y_2\right) + \frac{1+a-\alpha_2 c}{A}\right\} \\
  &\leq \frac{E}{A}\left(\log\left(\frac{E}{A}\right)-1\right)+ \frac{\alpha_2 c E}{A} +   E\left\{\frac{\alpha_2}{A}\left(\frac{X}{A}+Y_2\right) + \frac{1+a-\alpha_2 c}{A}\right\} \\
  &= \frac{E}{A}\left\{-\alpha_2\left(\frac{X}{A}+Y_2\right)-a-1 + \alpha_2 c + \alpha_2\left(\frac{X}{A}+Y_2\right) + 1+a-\alpha_2c\right\} \\
  &= 0.
\end{align*}
The regularity of the terms in $\mu_V$ and $\sigma_V$ and $\mu_V\leq 0$ imply that $V$ is a supermartingale.

Since $Y_{2,T}=0$ and $A_T = \exp(a_T) = 1$, we have
\begin{align}\label{calc:ag2-optimality}
\begin{split}
  \E\left[\int_0^T -e^{-\rho_2t -\alpha_2 c_t}dt - e^{-\rho_2T-\alpha_2X_T}\right]
  & = \E[V_T] \\
  &\leq V_0 = -E_0 \\
  &= -\exp\left(-\alpha_2\left(\frac{X_0}{A_0}+Y_{2,0}\right)\right) \\
  &= -\exp\left(-\alpha_2\left(\theta_{2,0-}+Y_{2,0}\right)\right).
\end{split}
\end{align}

Next, we turn to the candidate optimal strategies and the candidate optimal wealth process for agent $2$.  By Lemma~\ref{lemma:xi}, the strategies $(\theta_2, 0, c_2)$ and corresponding wealth process $X_2$ are admissible with respect to $\xi$. 
%
Similar to above, we define $E_2 := \exp\left(-\alpha_2\left(\frac{X_2}{A}+Y_{2}\right)\right)$ and
\begin{align*}
  V_{2,t} := -E_{2,t}\exp(-\rho_2t) - \int_0^t \exp(-\alpha_2 c_{2,u}-\rho_2u)du, \quad t\in[0,T].
\end{align*}
Using the dynamics of $Y_2$ in \eqref{def:bsde} and $\frac{X_2}{A}$ in~\eqref{def:X2}, we see that $V_2$ is a local martingale with dynamics and initial condition
$$
  dV_{2,t} = \alpha_2 E_{2,t}e^{-\rho_2t}\sigma_{D,t} Z_{2,t}dB_t, \quad V_{2,0} = E_{2,0} = - \exp\left(-\alpha_2\left(\theta_{2,0-}+Y_{2,0}\right)\right).
$$
Since $a, Y_2 \in\sS^\infty$, $Z_2\in\bmo$, and $\sigma_D$ is progressively measurable and bounded, we have that $V_2$ is a martingale.  Recalling the calculation~\eqref{calc:ag2-optimality} above, we have that for any strategies $(\theta, 0, c)$ that are admissible with respect to $\xi$ with corresponding wealth process $X$ and $\theta_{0-}=\theta_{2,0-}$, we have
\begin{align*}
  \E\left[\int_0^T -e^{-\rho_2t -\alpha_2 c_t}dt - e^{-\rho_2T-\alpha_2X_T}\right]
  & = \E[V_T] \\
  &\leq V_0 \\
  &= -\exp\left(-\alpha_2\left(\theta_{2,0-}+Y_{2,0}\right)\right)\\
  &= V_{2,0}\\
  &= \E[V_{2,T}] \\
  &= \E\left[\int_0^T -e^{-\rho_2t -\alpha_2 c_{2,t}}dt - e^{-\rho_2T-\alpha_2X_{2,T}}\right].
\end{align*}
Therefore, the strategies $(\theta_2,0,c_2)$ with wealth $X_2$ are optimal for agent $2$.
\end{proof}

Next, we turn to the proof of Theorem~\ref{thm:existence}, which proves the existence and uniqueness of an $\sS^\infty\times\bmo$ solution to \eqref{def:bsde}.
\begin{proof}[Proof of Theorem \ref{thm:existence}] 
In order to prove the existence and uniqueness of a solution to \eqref{def:bsde}, we break the proof into steps.  First, we notice that agent $1$'s equation, $Y_1$, in BSDE system \eqref{def:bsde} decouples from the three-dimensional system.  We will initially prove the existence and uniqueness of an $\sS^\infty\times\bmo$ solution $\big((a,Z_a), (Y_2,Z_2)\big)$ to the two-dimensional BSDE system \eqref{def:bsde} with $Y_1$ excluded.  The $Y_1$ equation's solution will be constructed at the end of the proof.  In order to prove existence and uniqueness of the two-dimensional subsystem, we perform a truncation and linear transformation.  The truncated, transformed problem will be diagonally quadratic, allowing us to apply Theorem 2.4 from Fan et.\,al.~\cite{FHT20wp}.  Then, we develop estimates on the truncated problem in order to lift the truncation and, thus, prove existence and uniqueness for the two-dimensional BSDE subsystem of~\eqref{def:bsde}.  Finally, constructing a solution to $Y_1$'s equation will prove the existence of an $\sS^\infty\times\bmo$ solution to the original BSDE system~\eqref{def:bsde}.  We conclude the proof by showing that the solution to \eqref{def:bsde} is unique.\\

\noindent{\it Defining auxiliary two-dimensional truncated BSDE systems.} 
  For $N\geq 1$, we let $m_N(x):=\min(N,\max(-N,x))$, $x\in\R$.  We consider two truncated BSDE systems.  First, consider
\begin{align}\label{def:bsde-N}\tag{$BSDE_N$}
\begin{split}
  da&= \sigma_DZ_a\, dB + \left(-e^{-\max(a,-N)}+\alpha_\Sigma\mu_D + \rho_\Sigma-\frac{\alpha_\Sigma\alpha_2}{2}\sigma_D^2Z_2^2-\frac{\alpha_\Sigma\alpha_1}{2}\sigma_D^2\left(1-Z_2-\frac{Z_a}{\alpha_\Sigma}\right)^2\right)dt,\\
  dY_2 &= \sigma_D Z_2\,dB+\left(-\frac{\rho_2}{\alpha_2}+\frac{1}{\alpha_2}e^{-\max(a,-N)}\left(1+\alpha_2 m_N(Y_2)+ \max(a,-N)\right)+\frac{\alpha_2}{2}\sigma_D^2 Z_2^2\right)dt,\\
  a_T &= Y_{2,T} = 0.
\end{split}
\end{align}
The system \eqref{def:bsde-N} is a truncated version of the original two-dimensional BSDE system~\eqref{def:bsde}.  For $N\geq 1$, we also consider the BSDE system
\begin{align}\label{def:bsde-transform}\tag{$BSDE'_N$}
\begin{split}
  dY_\Sigma&= \sigma_DZ_\Sigma\, dB + \alpha_\Sigma\left(\mu_D + \frac{\rho_1}{\alpha_1}-\frac{\alpha_1}{2}\sigma_D^2\left(1-\frac{Z_\Sigma}{\alpha_\Sigma}\right)^2\right.\\
  &\quad\quad \left.+e^{-\max(Y_\Sigma-\alpha_\Sigma Y_2,-N)}\left(-\frac{1}{\alpha_\Sigma}+\frac{1}{\alpha_2}\left(1+ \alpha_2 m_N(Y_2)+\max(Y_\Sigma-\alpha_\Sigma Y_2,-N)\right)\right)\right)dt,\\
  dY_2 &= \sigma_D Z_2\,dB+\left(-\frac{\rho_2}{\alpha_2}+\frac{\alpha_2}{2}\sigma_D^2 Z_2^2\right.\\
  &\quad\quad \left.+\frac{1}{\alpha_2}e^{-\max(Y_\Sigma-\alpha_\Sigma Y_2,-N)}\left(1+\alpha_2 m_N(Y_2)+ \max(Y_\Sigma-\alpha_\Sigma Y_2,-N)\right)\right)dt,\\
  Y_{\Sigma,T} &= Y_{2,T} = 0.
\end{split}
\end{align}
The systems \eqref{def:bsde-N} and \eqref{def:bsde-transform} are related through a linear transformation.  In particular, for $N\geq 1$, given the existence and uniqueness of an $\sS^\infty\times\bmo$ solution $\big((Y_\Sigma, Z_\Sigma), (Y_2, Z_2)\big)$ to \eqref{def:bsde-transform}, there exists a unique $\sS^\infty\times\bmo$ solution $\big((a,Z_a),(Y_2,Z_2)\big)$ to \eqref{def:bsde-N}.  The two solutions are related by defining $a:= Y_\Sigma-\alpha_\Sigma Y_2$ and $Z_a:=Z_\Sigma -\alpha_\Sigma Z_2$.  Therefore, in order to prove the existence and uniqueness of an $\sS^\infty\times\bmo$ solution to \eqref{def:bsde-N}, it suffices to show the existence and uniqueness of an $\sS^\infty\times\bmo$ solution to \eqref{def:bsde-transform}. \\

\noindent{\it Existence and uniqueness of \eqref{def:bsde-N} and \eqref{def:bsde-transform}.}
To establish the existence and uniqueness of an $\sS^\infty\times\bmo$ solution to \eqref{def:bsde-transform}, we apply Theorem 2.4 in Fan et.\,al.~\cite{FHT20wp} to \eqref{def:bsde-transform} after first checking that the conditions are met.  The drivers for $Y_\Sigma$ and $Y_2$ in~\eqref{def:bsde-transform} are given, respectively, for $t\in[0,T]$, $\omega\in\Omega$, $y_\Sigma,y_2\in\R$,  $z_\Sigma, z_2\in\R$ by
\begin{align*}
  g_\Sigma&(t,\omega,y_\Sigma,y_2,z_\Sigma,z_2) \\
    &= \alpha_\Sigma\left(\mu_{D,t}(\omega) + \frac{\rho_1}{\alpha_1}-\frac{\alpha_1}{2}\sigma_{D,t}(\omega)^2\left(1-\frac{z_\Sigma}{\alpha_\Sigma}\right)^2\right.\\
  &\quad\quad \left.+e^{-\max(y_\Sigma-\alpha_\Sigma y_2,-N)}\left(-\frac{1}{\alpha_\Sigma}+\frac{1}{\alpha_2}\left(1+ \alpha_2 m_N(y_2)+\max(y_\Sigma-\alpha_\Sigma y_2,-N)\right)\right)\right)\\
  g_2&(t,\omega,y_\Sigma,y_2,z_\Sigma,z_2) \\
    &= -\frac{\rho_2}{\alpha_2}+\frac{1}{\alpha_2}e^{-\max(y_\Sigma-\alpha_\Sigma y_2,-N)}\left(1+\alpha_2 m_N(y_2)+ \max(y_\Sigma-\alpha_\Sigma y_2,-N)\right)+\frac{\alpha_2}{2}\sigma_{D,t}^2(\omega) z_2^2.
\end{align*}
By Assumption~\ref{asm:state-process}, $\mu_D$ and $\sigma_D$ are progressively measurable and uniformly bounded by $M>0$.  The functions $m_N$, $\exp(-\max(\cdot, -N))$, and
\begin{equation}\label{eqn:bdd-fn}
  x\mapsto e^{-\max(x,-N)}(1+\max(x,-N)), \quad x\in\R
\end{equation}
are bounded and Lipschitz.  The bounds and Lipschitz constants depend on $N$, which is okay at this stage in the proof.  In the notation of Fan et.\,al.~\cite{FHT20wp}, we take
\begin{align}
  \alpha_t(\omega) &:= \max\left(\alpha_\Sigma\left(\frac{\rho_1}{\alpha_1} + |\mu_{D,t}(\omega)|+ \alpha_1 M^2\right), \frac{\rho_2}{\alpha_2}\right), \quad t\in[0,T],\ \omega\in\Omega,\label{def:alpha}\\
  \phi &:= \exp(N)\left(1+N\left(1+\frac{1}{\alpha_2}\right)\max(\alpha_\Sigma,1)\right), \notag\\
  \gamma &:= \max\left(\frac{2\alpha_1}{\alpha_\Sigma}M^2, \alpha_2 M^2\right).\notag
\end{align}
Since $\alpha_\Sigma<\alpha_2$, we have $\phi > e^N\left(1+\alpha_\Sigma N\left(1+\frac{1}{\alpha_2}\right)\right)$.  Thus, hypothesis $(H1)$ from Fan et.\,al.~\cite{FHT20wp} holds since for $t\in[0,T]$, $\omega\in\Omega$, $y_\Sigma, y_2\in\R$, and $z_\Sigma, z_2\in\R$, we have
\begin{align}\label{eqn:g-estimates}
\begin{split}
  |g_\Sigma(t,\omega,y_\Sigma,y_2,z_\Sigma,z_2)| &\leq \alpha_t(\omega) + e^N\left(1+\alpha_\Sigma N\left(1+\frac{1}{\alpha_2}\right)\right) + \frac{\gamma}{2}z_\Sigma^2
    \leq\alpha_t(\omega) + \phi + \frac{\gamma}{2}z_\Sigma^2,\\
  |g_2(t,\omega,y_\Sigma,y_2,z_\Sigma,z_2)| &\leq \alpha_t(\omega)+N e^N\left(1+\frac{1}{\alpha_2}\right)+\frac{\gamma}{2}z_z^2
    \leq \alpha_t(\omega) + \phi + \frac{\gamma}{2}z_2^2.
  \end{split}
\end{align}

Next, we verify hypothesis $(H2)$ from Fan et.\,al.~\cite{FHT20wp}.  We use the Lipschitz properties of $m_N$, $\exp(-\max(\cdot, -n))$, and \eqref{eqn:bdd-fn} to see that there exists a constant $C>0$ such that for $t\in[0,T]$, $\omega\in\Omega$, $y_\Sigma, y_2, \ybar_\Sigma, \ybar_2\in\R$, and $z_\Sigma, z_2, \zbar_\Sigma, \zbar_2\in\R$, we have
\begin{align*}
  |&g_\Sigma(t,\omega,y_\Sigma,y_2,z_\Sigma,z_2)-g_\Sigma(t,\omega,\ybar_\Sigma,\ybar_2,\zbar_\Sigma,\zbar_2)|\\
  &\leq e^N\left(2+\alpha_\Sigma N\left(1+\frac{1}{\alpha_2}\right)\right)\left(|y_\Sigma-\ybar_\Sigma|+\alpha_\Sigma |y_2-\ybar_2|\right) + \frac{\alpha_1 M^2}{2\alpha_\Sigma^2}\left(2\alpha_\Sigma + |z_\Sigma|+|\zbar_\Sigma|\right)|z_\Sigma-\zbar_\Sigma|\\
    &\leq C\left(1+|z_\Sigma|+|\zbar_\Sigma|\right)\left(|y_\Sigma-\ybar_\Sigma|+|y_2-\ybar_2|+|z_\Sigma-\zbar_\Sigma|\right),
\end{align*}
and
\begin{align*}
  |&g_2(t,\omega,y_\Sigma,y_2,z_\Sigma,z_2)-g_2(t,\omega,\ybar_\Sigma,\ybar_2,\zbar_\Sigma,\zbar_2)|\\
  &\leq e^N\left(\frac{1}{\alpha_\Sigma}+N\left(1+\frac{1}{\alpha_2}\right)\right)\left(|y_\Sigma-\ybar_\Sigma|+\alpha_\Sigma|y_2-\ybar_2|\right)
    + \frac{\alpha_2 M^2}{2}\left( |z_2|+|\zbar_2|\right)|z_2-\zbar_2|\\
    &\leq C\left(1+|z_2|+|\zbar_2|\right)\left(|y_\Sigma-\ybar_\Sigma|+|y_2-\ybar_2|+|z_2-\zbar_2|\right).
\end{align*}
Hence, hypothesis $(H2)$ from Fan et.\,al.~\cite{FHT20wp} holds.  Hypothesis $(H3)$ from Fan et.\,al.~\cite{FHT20wp} holds because $Y_{\Sigma,T} = Y_{2,T}=0$ and $\alpha$, defined in \eqref{def:alpha}, is uniformly bounded.

To see that hypothesis $(H4)$ from Fan et.\,al.~\cite{FHT20wp} holds for $\lambda=0$, we see from \eqref{eqn:g-estimates} above that for $t\in[0,T]$, $\omega\in\Omega$, $y_\Sigma, y_2\in\R$, and $z_\Sigma, z_2\in\R$,
\begin{align*}
  \text{sgn}(y_\Sigma)g_\Sigma(t,\omega,y_\Sigma,y_2,z_\Sigma,z_2)&\leq |g_\Sigma(t,\omega,y_\Sigma,y_2,z_\Sigma,z_2)| \leq \alpha_t(\omega) + \phi + \frac{\gamma}{2}z_\Sigma^2,\\
  \text{sgn}(y_2)g_2(t,\omega,y_\Sigma,y_2,z_\Sigma,z_2)&\leq |g_2(t,\omega,y_\Sigma,y_2,z_\Sigma,z_2)| \leq \alpha_t(\omega) + \phi + \frac{\gamma}{2}z_2^2.
\end{align*}
Since hypotheses $(H1)$--$(H3)$ and $(H4)$ with $\lambda=0$ hold (using the $\lambda$ notation from Fan et.\,al.~\cite{FHT20wp}), we apply Theorem 2.4 
 from Fan et.\,al.~\cite{FHT20wp} to obtain a unique $\sS^\infty\times\bmo$ solution $\left((Y_\Sigma^{(N)},Z_\Sigma^{(N)}),(Y_2^{(N)},Z_2^{(N)})\right)$ to \eqref{def:bsde-transform}.  Using the linear transformation $a^{(N)}:= Y^{(N)}_\Sigma -\alpha_\Sigma Y^{(N)}_2$ and $Z_a^{(N)}:= Z^{(N)}_\Sigma -\alpha_\Sigma Z^{(N)}_2$, we have that $\left((a^{(N)},Z_a^{(N)}),(Y_2^{(N)},Z_2^{(N)})\right)$ is the unique $\sS^\infty\times\bmo$ solution to \eqref{def:bsde-N}.\\

\noindent{\it Estimates uniform in $N$.} 
The bounds on $a^{(N)}$ and $Y_2^{(N)}$, and the $\bmo$ norms of $Z_a^{(N)}$ and $Z_2^{(N)}$ depend on $N$.  We seek to remove some of this dependence in order to prove the existence of a solution to the two-dimensional untruncated subsystem~\eqref{def:bsde}.  For the remainder of the proof, we denote a universal constant by $C$.  For us, a universal constant refers to a value $C>0$ that does not depend on $N$.  We allow for $C$ to depend on $\alpha_1$, $\alpha_2$, $\rho_1$, $\rho_2$, and bounds on $\mu_D$ and $\sigma_D$, and we also allow for it to change from line to line or paragraph to paragraph.

First, we perform an estimate on $a^{(N)}$, providing a lower bound. Since the driver of $a^{(N)}$ is bounded by
$$
  g_\Sigma(t,\omega,y_\Sigma, y_2, z_\Sigma, z_2) -\alpha_\Sigma g_2(t,\omega,y_\Sigma, y_2, z_\Sigma, z_2)
  \leq \alpha_\Sigma \mu_{D,t}(\omega) + \rho_\Sigma
   \leq C,
$$ 
we see that $\left(a^{(N)}_t - \int_0^t\left(\alpha_\Sigma \mu_{D,u} + \rho_\Sigma\right)du\right)_{t\in[0,T]}$ is a supermartingale, and therefore,
\begin{equation}\label{eqn:aN-bound}
  a^{(N)}_t \geq \E\left[a^{(N)}_T-C(T-t)\,|\,\sF_t\right] \geq -CT, \quad t\in[0,T].
\end{equation}

To produce a bound on $Y_2^{(N)}$, we use the fact that there is only one quadratic term in $Y_2^{(N)}$'s driver.  This, and the driver's linearity in $y$, allow us to relax the $N$-dependence in the upper and lower bounds using a Gronwall's Lemma approach.

Since $Z_2^{(N)}\in\bmo$ and $\sigma_D$ is bounded, Theorem 2.3 in Kazamaki~\cite{Kaz94} implies that there exists a probability measure $\widetilde{\bP}$ under which
$$
  d\widetilde{B}_t = dB_t +\frac12 \sigma_{D,t}\, Z^{(N)}_{2,t}dt, \quad t\in[0,T],
$$
is a Brownian motion.  
Theorem 3.3 in Kazamaki~\cite{Kaz94} guarantees that $Z_2^{(N)}$ remains in $\bmo$ under $\widetilde\bP$.  Thus, $\int_0^\cdot \sigma_D Z_2^{(N)}d\widetilde{B}$ is a $\widetilde\bP$-martingale.  We also note that $e^{-x}(1+x)$ is nonnegative and bounded from above by a universal constant when $x\geq -C$.  Taking $\widetilde\bP$-expectations of $Y^{(N)}_{2,t}$ yields
\begin{align}\label{eqn:Y2-calc}
\begin{split}
  &\left|Y^{(N)}_{2,t}\right|\\
  &= \left|\widetilde\E\left[Y^{(N)}_{2,T} - \int_t^T\left(-\frac{\rho_2}{\alpha_2}+\frac{1}{\alpha_2}e^{-\max(a^{(N)}_u,-N)}\left(1+\alpha_2 m_N\left(Y^{(N)}_{2,u}\right)+ \max\left(a^{(N)}_u,-N\right)\right)\right)du\,|\,\sF_t\right]\right|\\
  &\leq \frac{1}{\alpha_2}\int_t^T \widetilde\E\left[\rho_2+
    e^{-\max(a^{(N)}_u,-N)}\left(\left|1+\max(a^{(N)}_u,-N)\right|
    +\alpha_2 \left|m_N\left(Y^{(N)}_{2,u}\right)\right|\right)\,|\,\sF_t\right]du\\
  &\leq C\left(1+\int_t^T y^{(N)}(u)du\right),
\end{split}
\end{align}
where $y^{(N)}(t):= \left\|Y^{(N)}_{2,t}\right\|_{\bL^\infty(\widetilde\bP)}=\left\|Y^{(N)}_{2,t}\right\|_{\bL^\infty(\bP)}$, $t\in[0,T]$.  Since $y^{(N)}(T)=0$ and $\left\|Y^{(N)}_2\right\|_{\sS^\infty} = \sup_{t\in[0,T]} y^{(N)}(t)$, Gronwall's inequality implies that $y^{(N)}(t)\leq Ce^{C(T-t)}\leq Ce^{CT}$.  Thus, $\left\|Y^{(N)}_2\right\|_{\sS^\infty}$ is bounded by a universal constant.

\ \\
\noindent{\it Solving the original two-dimensional BSDE system.}  
Armed with the universal bounds derived above, we choose $N_0$ sufficiently large so that for $N\geq N_0$, the solution \\$\left((a^{(N)},Z_a^{(N)}),(Y_2^{(N)},Z_2^{(N)})\right)$ to \eqref{def:bsde-N} satisfies $a^{(N)}_t\geq - N_0$ for all $t\in[0,T]$ and $\left\|Y^{(N)}_2\right\|_{\sS^\infty}\leq N_0$.  Therefore, for all $N\geq N_0$, the truncated terms in \eqref{def:bsde-N} can be removed so that $\left((a^{(N)},Z_a^{(N)}),(Y_2^{(N)},Z_2^{(N)})\right)$ solves
\begin{align}\label{def:bsde-2d}\tag{$BSDE,\,2D$}
\begin{split}
  da&= \sigma_DZ_a\, dB + \left(-\exp(-a)+\alpha_\Sigma\mu_D + \rho_\Sigma-\frac{\alpha_\Sigma\alpha_2}{2}\sigma_D^2Z_2^2-\frac{\alpha_\Sigma\alpha_1}{2}\sigma_D^2\left(1-Z_2-\frac{Z_a}{\alpha_\Sigma}\right)^2\right)dt,\\
  dY_2 &= \sigma_D Z_2\,dB+\left(-\frac{\rho_2}{\alpha_2}+\frac{1 + a +\alpha_2Y_2}{\alpha_2 \exp(a)}+\frac{\alpha_2}{2}\sigma_D^2 Z_2^2\right)dt,\\
   a_T &= Y_{2,T} = 0,
\end{split}
\end{align}
which is precisely the coupled two-dimensional subsystem in \eqref{def:bsde}.

Uniqueness for \eqref{def:bsde-2d} within $\sS^\infty\times\bmo$ is not immediately obvious, even though solutions to the truncated \eqref{def:bsde-N} are unique within $\sS^\infty\times\bmo$.  To see that $\sS^\infty\times\bmo$ solutions to \eqref{def:bsde-2d} are unique, we need to consider analogous estimates as in the \eqref{def:bsde-N} case.  To this end, let $\big((a,Z_a),(Y_2,Z_2)\big)$ satisfy \eqref{def:bsde-2d}.  Then, as in the analysis of solutions to \eqref{def:bsde-N}, $\left(a_t - \int_0^t\left(\alpha_\Sigma\mu_{D,u}+\rho_\Sigma\right)du\right)_{t\in[0,T]}$ is a supermartingale.  As in \eqref{eqn:aN-bound}, we have $a\geq -CT$.  The analogous calculation to \eqref{eqn:Y2-calc} for $Y_2$ shows that $Y_2$ is bounded by the same universal constant as $Y^{(N)}_2$ for $N\geq N_0$.  Thus, $\big((a,Z_a),(Y_2,Z_2)\big)$ satisfies the truncated BSDE system~\eqref{def:bsde-N} for all $N\geq N_0$.  By $\sS^\infty\times\bmo$ uniqueness for \eqref{def:bsde-N}, solutions to \eqref{def:bsde-2d} are also unique within $\sS^\infty\times\bmo$.

\ \\
\noindent{\it Solving for $Y_1$.}  Let $\big((a,Y_2), (Z_a, Z_2)\big)$ be an $\sS^\infty\times\bmo$ solution to~\eqref{def:bsde-2d}, which has been shown to exist above.  Since $Z_a, Z_2\in\bmo$ and $\sigma_D$ is bounded, Theorem 2.3 in Kazamaki~\cite{Kaz94} implies that there exists a probability measure $\overline{\bP}$ under which
\begin{equation}\label{eqn:P-bar}
  d\overline{B}_t = dB_t + \alpha_1\sigma_{D,t} \left(1-Z_{2,t}-\frac{Z_{a,t}}{\alpha_\Sigma}\right)dt, \quad t\in[0,T],
\end{equation}
is a Brownian motion.  
Theorem 3.3 in Kazamaki~\cite{Kaz94} guarantees that $Z_a$ and $Z_2$ remain in $\bmo$ under $\overline\bP$.  We define $A := \exp(a)$.  Since $a\in\sS^\infty$, $A$ is bounded from above and below away from zero.  

We define $Y_{1,t}$ for $t\in[0,T]$ as the continuous version of
\begin{align*}
  Y_{1,t} := -\overline\E\left[\int_t^T e^{-\int_t^u \frac{1}{A_{u'}}du'}\left(-\frac{\rho_1}{\alpha_1}+\frac{1+a_u}{\alpha_1A_u}-\frac12\alpha_1\sigma_{D,u}^2\left(1-Z_{2,u}-\frac{Z_{a,u}}{\alpha_\Sigma}\right)^2\right)du\,|\,\sF_t\right].\end{align*}
To see that $Y_1$ is in $\sS^\infty$, we first note that $a\in\sS^\infty$ and $A$ is bounded from below away from zero.  Since $Z_a, Z_2\in\bmo(\overline\bP)$ and $\sigma_D$ is bounded,  we then have 
\begin{equation*}\label{eqn:bmo-bound-1}
  \sup_{\tau} \overline\E\left[\int_\tau^T \sigma_{D,u}^2\left(1-Z_{2,u}-\frac{Z_{a,u}}{\alpha_\Sigma}\right)^2du\,|\,\sF_\tau\right]\leq C < \infty,
\end{equation*}
where the supremum is taken over stopping times $\tau$.  Therefore, $Y_1\in\sS^\infty$.  

The process $M$ given for $t\in[0,T]$ by
$$
  M_t := e^{-\int_0^t \frac{1}{A_{u'}}du'}Y_{1,t}- \int_0^t e^{-\int_0^u \frac{1}{A_{u'}}du'}\left(-\frac{\rho_1}{\alpha_1}+\frac{1+a_u}{\alpha_1A_u}-\frac12\alpha_1\sigma_{D,u}^2\left(1-Z_{2,u}-\frac{Z_{a,u}}{\alpha_\Sigma}\right)^2\right)du
$$
is a square-integrable $\overline\bP$-martingale.  Since the filtration is the augmented Brownian filtration, the martingale representation theorem guarantees that there exists a progressively measurable integrand $\Delta$ such that $M_t = \int_0^t \Delta_u d\overline{B}_u$ for $t\in[0,T]$.  Since $\sigma_D$ is bounded from below away from zero, we set $Z_{1,t}:= \frac{\Delta_t\exp\left(\int_0^t \frac{1}{A_u}du\right)}{\sigma_{D,t}}$ for $t\in[0,T]$.  The $\overline\bP$-square integrability of $M$ gives us that $\overline\E\left[\int_0^T \exp\left(-2\int_0^t \frac{1}{A_u}du\right)\sigma_{D,u}^2Z_{1,u}^2du\right]<\infty$. Since $\overline\bP\sim\bP$, we have that $\int_0^T \exp\left(-2\int_0^t \frac{1}{A_u}du\right)\sigma_{D,u}^2Z_{1,u}^2du<\infty$, $\bP$-a.s., and likewise, $\int_0^T Z_{1,u}^2du<\infty$, $\bP$-a.s.

The dynamics of $M$ can be expressed two ways as $dM = \exp\left(-\int_0^\cdot \frac{1}{A_u}du\right)\sigma_D Z_1 d\overline B$ and
\begin{align*}
  dM 
  &= d\left(\exp\left(-\int_0^t \frac{1}{A_u}du\right) Y_1\right) \\
  &\quad\quad  - \exp\left(-\int_0^t \frac{1}{A_u}du\right)\left(-\frac{\rho_1}{\alpha_1}+\frac{1+a}{\alpha_1 A} - \frac12\alpha_1\sigma_D^2\left(1-Z_2-\frac{Z_a}{\alpha_\Sigma}\right)^2\right) dt\\
  &= \exp\left(-\int_0^t \frac{1}{A_u}du\right) dY_1 \\
  &\quad\quad  - \exp\left(-\int_0^t \frac{1}{A_u}du\right)\left(-\frac{\rho_1}{\alpha_1}+\frac{1+a+\alpha_1Y_1}{\alpha_1 A} - \frac12\alpha_1\sigma_D^2\left(1-Z_2-\frac{Z_a}{\alpha_\Sigma}\right)^2\right) dt.
\end{align*}
Solving for $dY_1$ gives us that
\begin{align*}
  dY_1
  &= \left(-\frac{\rho_1}{\alpha_1} + \frac{1+a+\alpha_1Y_1}{\alpha_1 A} - \frac12\alpha_1\sigma_D^2\left(1-Z_2-\frac{Z_a}{\alpha_\Sigma}\right)^2\right)dt
    + \sigma_D Z_1 d\overline B\\
  &= \left(-\frac{\rho_1}{\alpha_1} + \frac{1+a+\alpha_1Y_1}{\alpha_1 A} - \frac12\alpha_1\sigma_D^2\left(1-Z_2-\frac{Z_a}{\alpha_\Sigma}\right)^2
    + \alpha_1\sigma_D^2 Z_1 \left(1-Z_2-\frac{Z_a}{\alpha_\Sigma}\right)\right)dt \\
  &\quad\quad +\sigma_D Z_1 dB.
\end{align*}
Therefore, $\big((a,Z_a),(Y_1,Z_1),(Y_2,Z_2)\big)$ is a solution to \eqref{def:bsde}.  However, we claimed to have an $\sS^\infty\times\bmo$ solution, and it remains to show that $Z_1\in\bmo$.

\ \\
\noindent{\it Establishing $\bmo$ regularity of $Z_1$.}  We let $\phi(x):=-\exp(-x)$ for $x\in\R$, and let $\tau$ be a stopping time.  Applying It\^o's Lemma to $\phi(Y_1)$ under $\overline\bP$ and using that $\phi'$ is nonnegative while $a, Y_1\in\sS^\infty$ gives
\begin{align*}
  \phi&(Y_{1,T}) -\phi(Y_{1,\tau})\\
  &= \int_\tau^T \phi'(Y_{1,u})\sigma_{D,u}Z_{1,u}d\overline{B}_u
     + \frac12 \int_\tau^T \phi''(Y_{1,u})\sigma_{D,u}^2Z_{1,u}^2du \\
  &\quad\quad   + \int_\tau^T\left\{\phi'(Y_{1,u})\left(-\frac{\rho_1}{\alpha_1} + \frac{1+a_u+\alpha_1Y_{1,u}}{\alpha_1 A_u} - \frac12\alpha_1\sigma_{D,u}^2\left(1-Z_{2,u}-\frac{Z_{a,u}}{\alpha_\Sigma}\right)^2\right)\right\}du \\
  &\leq \int_\tau^T \phi'(Y_{1,u})\sigma_{D,u}Z_{1,u}d\overline{B}_u
     + \frac12 \int_\tau^T \phi''(Y_{1,u})\sigma_{D,u}^2Z_{1,u}^2du  + C.
\end{align*}
Since $Z_1$ is $\overline\bP$-square integrable, $\sigma_D$ is bounded, and $\phi'(Y_1)\in\sS^\infty$, we have that $\int_0^\cdot \phi'(Y_1)\sigma_D Z_1 d\overline{B}$ is a $\overline\bP$-martingale.  Moreover, since $Y_1\in\sS^\infty$, and $\phi''$ is negative and continuous, we may rearrange terms and compute $\overline\bP$-conditional expectations to obtain
\begin{align*}
  \overline\E\left[\int_\tau^T \sigma_{D,u}^2 Z_{1,u}^2du\,|\,\sF_\tau\right] \leq C.
\end{align*}
Recall that $C$ is a universal constant that does not depend on the choice of stopping time $\tau$, and $\sigma_D$ is bounded from below away from zero.  Thus, $Z_1\in\bmo(\overline\bP)$.  Theorem 3.3 in Kazamaki~\cite{Kaz94} guarantees that $Z_1\in\bmo(\bP)$, which completes the proof that $\big((a,Z_a),(Y_1,Z_1),(Y_2,Z_2)\big)$ is an $\sS^\infty\times\bmo$ solution to \eqref{def:bsde}.

\ \\
\noindent{\it Uniquenss for $(Y_1, Z_1)$.} Let $\big((a,Z_a,),(Y_2,Z_2)\big)$ be the unique $\sS^\infty\times\bmo$ solution to \eqref{def:bsde-2d}, and suppose that $\big((a,Z_a,),(Y_1,Z_1),(Y_2,Z_2)\big)$ and $\big((a,Z_a,),(\widetilde Y_1,\widetilde Z_1),(Y_2,Z_2)\big)$ are $\sS^\infty\times\bmo$ solutions to \eqref{def:bsde}.  We calculate the dynamics of under the probability measure $\overline\bP$ with Brownian motion $\overline B$ defined in \eqref{eqn:P-bar}, which gives us
$$
  d\left(Y_1-\widetilde Y_1\right)
  = \sigma_D \left(Z_1-\widetilde Z_1\right)d\overline B + \left(\frac{Y_1-\widetilde Y_1}{A}\right)dt.
$$
Using that $Y_{1,T}=\widetilde Y_{1,T}$ and taking $\overline\bP$-conditional expectations for $t\in [0,T]$ yields $Y_{1,t}-\widetilde Y_{1,t} = -\overline\E\left[\int_t^T \frac{Y_{1,u}-\widetilde Y_{1,u}}{A_u} du\,|\,\sF_t\right]$.  Since $A$ is bounded from below away from zero by a constant $1/C$, we have
$$
  \overline\E\left[\left|Y_{1,t}-\widetilde Y_{1,t}\right|\right]
  \leq C \int_t^T \overline\E\left[\left|Y_{1,u}-\widetilde Y_{1,u}\right| \right] du.
$$ 
Gronwall's Lemma gives us that $\overline\E\left[\left|Y_{1,t}-\widetilde Y_{1,t}\right|\right] = 0$.  Since $Y_1= \widetilde Y_1$, we have $\int_0^\cdot \sigma_D Z_1 dB = \int_0^\cdot \sigma_D \widetilde Z_1 dB$, which implies that
$$
  \int_0^T \sigma_{D,u}^2\left(Z_{1,u}-\widetilde Z_{1,u}\right)^2 du 
  = \langle \int_0^\cdot \sigma_D\left(Z_1-\widetilde Z_1\right)dB\rangle_T
  = 0.
$$
Since $\sigma_D$ is bounded from below away from zero, we have $Z_1=\widetilde Z_1$. 
Thus, $\big((a,Z_a),(Y_1,Z_1),(Y_2,Z_2)\big)$ is the unique $\sS^\infty\times\bmo$ solution to the BSDE system \eqref{def:bsde}.

\end{proof}

\bibliographystyle{plain}
\bibliography{finance_bib}

\end{document}